\newcommand{\edit}[1]{\footnote{#1}\marginpar{\hfill {\sf\thefootnote}}}
\newcommand{\cN}{\mathcal N}
\DeclareMathOperator{\MW}{MW}
\DeclareMathOperator{\Pic}{Pic}
\newcommand{\an}{{\rm an}}
\DeclareMathOperator{\Sel}{Sel}
\renewcommand{\Re}{\mbox{\rm Re}}
\DeclareMathOperator{\Imm}{Im}
\renewcommand{\Im}{\Imm}
  \newcommand{\textcyr}[1]{%
    {\fontencoding{OT2}\fontfamily{wncyr}\fontseries{m}\fontshape{n}%
     \selectfont #1}}
\newcommand{\Sha}{{\mbox{\textcyr{Sh}}}}
\newcommand{\cA}{\mathcal{A}}
\newcommand{\cG}{\mathcal{G}}
\newcommand{\cO}{\mathcal{O}}
\newcommand{\ds}{\displaystyle}
\DeclareMathOperator{\tor}{tor}
\DeclareMathOperator{\tors}{tors}
\newcommand{\ra}{\rightarrow}
\newcommand{\hra}{\hookrightarrow}
\newcommand{\eps}{\varepsilon}
\newcommand{\vphi}{\varphi}
\newcommand{\comment}[1]{}
\newcommand{\Q}{\mathbb{Q}}
\newcommand{\R}{\mathbb{R}}
\newcommand{\C}{\mathbb{C}}
\newcommand{\Qbar}{\overline{\Q}}
\newcommand{\Z}{\mathbb{Z}}
\newcommand{\F}{\mathbb{F}}
\newcommand{\isom}{\cong}
\DeclareMathOperator{\ab}{ab}
\DeclareMathOperator{\Aut}{Aut}
\DeclareMathOperator{\ord}{ord}
\DeclareMathOperator{\GL}{GL}
\DeclareMathOperator{\Gal}{Gal}
\newcommand{\rhobar}{\overline{\rho}}
\newcommand{\h}{\mathfrak{h}}
\DeclareMathOperator{\Tr}{Tr}
\DeclareMathOperator{\HH}{H}
\renewcommand{\H}{\HH}
\theoremstyle{plain}
\newtheorem{theorem}{Theorem}[section]
\newtheorem{proposition}[theorem]{Proposition}
\newtheorem{corollary}[theorem]{Corollary}
\newtheorem{lemma}[theorem]{Lemma}
\newtheorem{conjecture}[theorem]{Conjecture}
\theoremstyle{definition}
\newtheorem{alg}[theorem]{Algorithm}
\newtheorem{remark}[theorem]{Remark}
\numberwithin{equation}{section}
\numberwithin{figure}{section}
\numberwithin{table}{section}
\newcounter{listnum}
\newcommand{\np}[1]{#1}   
\renewcommand{\np}[1]{}
\title{Explicit Heegner Points: Kolyvagin's Conjecture and Non-trivial Elements in the 
Shafarevich-Tate Group} 
\author{Dimitar Jetchev  
\and Kristin Lauter 
\and William Stein 
}
\date{}
\begin{document}
\maketitle

\begin{abstract} 
  Kolyvagin used Heegner points to associate a system of cohomology 
  classes to an elliptic curve over $\Q$ and conjectured that the system contains 
  a non-trivial class. 
  His conjecture has profound implications on the structure of Selmer groups. 
  We provide new computational and theoretical evidence for Kolyvagin's 
  conjecture. More precisely, we explicitly compute Heegner points over ring class 
  fields and use these points to verify the conjecture for specific elliptic curves 
  of rank two. We explain how Kolyvagin's conjecture implies that if the 
  analytic rank of an elliptic curve is at least two then the $\Z_p$-corank of 
  the corresponding 
  Selmer group is at least two as well. We also use explicitly computed Heegner 
  points to produce non-trivial classes in the Shafarevich-Tate group.   
\end{abstract}

\tableofcontents

\section{Introduction}

Let $E_{/F}$ be an elliptic curve over a number field $F$.
The analytic rank $r_\an(E/F)$ of $E$ is 
the order of vanishing  of the $L$-function 
$L(E_{/F}, s)$ at $s = 1$.
The Mordell-Weil rank $r_{\MW}(E/F)$ is the rank of the
Mordell-Weil group $E(F)$.  
The conjecture of Birch and Swinnerton-Dyer is the assertion that $r_\an(E/F) = r_{\MW}(E/F)$. 

Kolyvagin constructed explicit cohomology classes from Heegner points over certain 
abelian extensions of quadratic imaginary fields and used these classes to bound 
the size of the Selmer groups for elliptic curves over $\Q$ of analytic rank at 
most one (see~\cite{kolyvagin:euler_systems},~\cite{kolyvagin:mordellweil} and~\cite{gross:kolyvagin}). His results, together with the Gross-Zagier formula (see~\cite{gross-zagier}), imply the following theorem:  
 
\begin{theorem}[Gross-Zagier, Kolyvagin]
Let $E_{/\Q}$ be an elliptic curve which satisfies $r_{\an}(E/\Q) \leq 1$. Then $r_{\an}(E/\Q) = r_{\MW}(E/\Q)$.
\end{theorem}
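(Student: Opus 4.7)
The plan is to treat the two cases $r_{\an}(E/\Q) = 0$ and $r_{\an}(E/\Q) = 1$ uniformly by passing to a well-chosen imaginary quadratic field $K$ in which the analytic rank becomes exactly $1$, and then combining the Gross-Zagier formula with Kolyvagin's Euler-system bound on the Mordell-Weil and Shafarevich-Tate groups.

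First I would select an auxiliary imaginary quadratic field $K$ satisfying the Heegner hypothesis relative to the conductor $N$ of $E$ (every prime dividing $N$ splits in $K$). Writing $E^K$ for the quadratic twist, the $L$-function factors as $L(E/K, s) = L(E/\Q, s) L(E^K/\Q, s)$. By the non-vanishing theorems of Bump--Friedberg--Hoffstein and Murty--Murty for derivatives of twisted $L$-functions of modular forms, one can choose $K$ so that
\begin{itemize}
\item if $r_{\an}(E/\Q) = 0$, then $L(E^K/\Q, s)$ vanishes to order exactly $1$ at $s=1$, and
\item if $r_{\an}(E/\Q) = 1$, then $L(E^K/\Q, 1) \neq 0$.
\end{itemize}
In both cases $r_{\an}(E/K) = 1$, and the sign of the functional equation for $E/K$ is $-1$, which is compatible with the Heegner hypothesis.

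Next I would invoke the Gross--Zagier formula: under the Heegner hypothesis, the basic Heegner point $y_K \in E(K)$ attached to the modular parametrization $\phi \colon X_0(N) \to E$ satisfies a height identity of the form
\[
\hat{h}(y_K) = c \cdot L'(E/K, 1)
\]
for an explicit nonzero constant $c$. Since $r_{\an}(E/K) = 1$, one has $L'(E/K, 1) \neq 0$, so $y_K$ has infinite order in $E(K)$. Then Kolyvagin's theorem, applied to the Euler system of Heegner points derived from $y_K$, implies that $E(K)$ has rank exactly $1$ and that $\Sha(E/K)$ is finite; in particular $y_K$ generates a finite-index subgroup of $E(K)$.

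Finally I would descend from $K$ to $\Q$ using the action of the nontrivial element $\tau \in \Gal(K/\Q)$: the decomposition $E(K) \otimes \Q = (E(\Q) \otimes \Q) \oplus (E^K(\Q) \otimes \Q)$ splits into $\tau$-eigenspaces, and $\tau(y_K) = -\epsilon \cdot y_K$ up to torsion, where $\epsilon = \pm 1$ is determined by the sign of the functional equation of $E/\Q$. When $r_{\an}(E/\Q) = 0$, the sign is $+1$ and the infinite-order class of $y_K$ lies in the minus-eigenspace, forcing $r_{\MW}(E/\Q) = 0$; when $r_{\an}(E/\Q) = 1$, the sign is $-1$, $y_K$ lies in the plus-eigenspace, and the same rank-one conclusion on $E(K)$ gives $r_{\MW}(E/\Q) = 1$. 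The main obstacle is marshalling the three deep inputs (non-vanishing of twists, Gross--Zagier, Kolyvagin) together with the sign analysis; each is nontrivial but all are available in the literature cited in the paper.
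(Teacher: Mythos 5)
The paper does not give a proof of this theorem; it cites Gross--Zagier and Kolyvagin and asserts that the statement follows from their combined work. Your sketch correctly reconstructs the standard argument implicit in those citations --- choice of an auxiliary imaginary quadratic field with $r_{\an}(E/K)=1$ via the non-vanishing results, Gross--Zagier to get $y_K$ of infinite order, Kolyvagin to pin $\rank E(K)=1$, and the eigenspace/sign analysis under complex conjugation to descend to $\Q$ --- so it is consistent with what the paper intends.
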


Unfortunately, very little is known about the Birch and Swinnerton-Dyer conjecture 
for elliptic curves $E_{/\Q}$ with 
$r_{\an}(E/\Q) \geq 2$. Still, it implies the following conjecture: 

\begin{conjecture}\label{conj:an-MW}
If $r_{\an}(E/\Q) \geq 2$ then $r_{\MW}(E/\Q) \geq 2$. 
\end{conjecture}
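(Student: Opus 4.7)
My plan is to deduce Conjecture~\ref{conj:an-MW} from Kolyvagin's conjecture by selecting an auxiliary imaginary quadratic field $K$, producing non-trivial classes in the Selmer group of $E/K$ via the Kolyvagin system, and then extracting rank information for $E/\Q$ from the $\Gal(K/\Q)$-eigenspace decomposition.

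First I would invoke analytic non-vanishing results for quadratic twists of $L$-functions (Bump--Friedberg--Hoffstein, Murty--Murty, Waldspurger) to choose an imaginary quadratic field $K$ such that (i) every prime dividing the conductor $N$ of $E$ splits in $K$ (the Heegner hypothesis), and (ii) $r_\an(E^K/\Q) = 1$, where $E^K$ is the quadratic twist. The Heegner hypothesis forces the sign of the functional equation of $L(E/K, s)$ to be $-1$, and from the factorization $L(E/K,s) = L(E/\Q,s)\cdot L(E^K/\Q,s)$ together with $r_\an(E/\Q) \geq 2$ we get $r_\an(E/K) \geq 3$, which is odd and so consistent with the sign.

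Second I would apply Kolyvagin's construction to obtain his derived cohomology classes $c_n \in H^1(K, E[p^\infty])$ indexed by squarefree products $n = \ell_1 \cdots \ell_k$ of Kolyvagin primes, for an appropriately chosen prime $p$. The base class $c_1$ is (via Kummer) essentially the image of the Heegner point $y_K \in E(K)$; since $r_\an(E/K) \geq 3$, the Gross--Zagier formula forces $y_K$ to be torsion, so $c_1 = 0$ for all $p$ away from the order of $y_K$. Kolyvagin's conjecture then guarantees some $n$ with $c_n \neq 0$; let $\rho \geq 1$ be the minimum of $k = \nu(n)$ over all such $n$. The structure theorem attached to the Kolyvagin system in the sign $-1$ case then yields $\corank_{\Z_p}\Sel_{p^\infty}(E/K) \geq 2\rho + 1 \geq 3$. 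The involution of $\Gal(K/\Q)$ splits the Selmer group, up to isogeny, as
\[
\Sel_{p^\infty}(E/K) \;\isog\; \Sel_{p^\infty}(E/\Q) \,\oplus\, \Sel_{p^\infty}(E^K/\Q),
\]
and since $r_\an(E^K/\Q) = 1$ the Gross--Zagier--Kolyvagin theorem gives $\corank_{\Z_p}\Sel_{p^\infty}(E^K/\Q) = 1$. Combining, $\corank_{\Z_p}\Sel_{p^\infty}(E/\Q) \geq 2$, and under the (standard) hypothesis that $\Sha(E/\Q)[p^\infty]$ is finite we conclude $r_{\MW}(E/\Q) \geq 2$.

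The main obstacle is Kolyvagin's conjecture itself, whose validity is open in general and to which this paper contributes computational evidence. Secondary technical issues include arranging that enough Kolyvagin primes are available, which requires irreducibility/surjectivity hypotheses on $\rhobar_{E,p}$ and control over the residual image; and the fact that the passage from $\Z_p$-Selmer corank to Mordell--Weil rank genuinely uses finiteness of $\Sha[p^\infty]$. Thus strictly speaking Kolyvagin's conjecture implies the Selmer-corank form of Conjecture~\ref{conj:an-MW} directly, and its Mordell--Weil form only up to finiteness of $\Sha(E/\Q)[p^\infty]$.
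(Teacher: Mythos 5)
This is a \emph{conjecture} in the paper, not a theorem. The authors state explicitly that ``nothing has been proved towards the above assertion,'' and they do not claim to prove Conjecture~\ref{conj:an-MW} even conditionally. What they \emph{do} prove, conditionally on Kolyvagin's Conjecture~\ref{conj:kolconj}, is the weaker Selmer-corank version (Conjecture~\ref{conj:an-corank}), via Corollary~\ref{cor:large-selmer}. You are right at the end of your note that the passage from $\Z_p$-corank of the Selmer group to Mordell--Weil rank requires finiteness of $\Sha(E/\Q)[p^\infty]$, which is not available; this is precisely why the paper stops at the Selmer statement. So strictly speaking you have not proved the target statement; you have reproduced the paper's conditional argument for Conjecture~\ref{conj:an-corank} and then added a further unverified hypothesis.

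Within that more modest goal, there is a genuine gap in your choice of auxiliary field. You propose to choose $K$ satisfying the Heegner hypothesis with $r_\an(E^K/\Q) = 1$. The Heegner hypothesis forces the sign of $L(E/K,s)$ to be $-1$, so the signs of $L(E/\Q,s)$ and $L(E^K/\Q,s)$ are opposite; hence if $r_\an(E/\Q)$ is odd then $r_\an(E^K/\Q)$ is even, and the requirement $r_\an(E^K/\Q)=1$ is impossible by parity. Your argument as written therefore only treats $r_\an(E/\Q)$ even. The paper handles the two parities separately in Corollary~\ref{cor:large-selmer}: for even analytic rank it uses Bump--Friedberg--Hoffstein/Murty--Murty to arrange $L'(E^D/\Q,1)\ne 0$ as you do; for odd analytic rank $\geq 3$ it uses Waldspurger to arrange $L(E^D/\Q,1)\ne 0$, i.e.\ $r_\an(E^D/\Q)=0$, and then runs the same eigenspace analysis with $r_p^{+}(E/K) = 0$. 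You should split into cases accordingly.

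Finally, your intermediate claim ``$\corank_{\Z_p}\Sel_{p^\infty}(E/K)\geq 2\rho+1$'' is not what Theorem~\ref{thm:kolselstruct} gives: the two eigenspaces have coranks $f+1$ and $r$ with $r\leq f$ and $f-r$ even, so the total corank is $f+1+r$, which can be as small as $f+1$ (e.g.\ $f=2$, $r=0$ gives total $3$, not $5$). The conclusion you need, $\corank\geq 3$ when $f\geq 1$, is still true and in fact follows more cleanly from the eigenspace decomposition directly, which is how the paper argues: given $r_p^-(E/K)=1$ and supposing $r_p^+(E/K)\leq 1$, parity and the constraint $r\leq f$ force $f=0$, contradicting $r_\an(E/\Q)\geq 2$ via Gross--Zagier. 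I would recommend restating the conclusion as Conjecture~\ref{conj:an-corank}, treating both parities, and invoking the eigenspace coranks rather than a total-corank inequality.
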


As far as we know, nothing has been proved towards the above assertion. 
A weaker conjecture can be formulated in the language of Selmer coranks. 
The Selmer corank $r_p(E/F)$ of $E_{/F}$ is the $\Z_p$-corank of the 
Selmer group $\Sel_{p^\infty}(E/F)$. Using Kummer theory, one shows   
that $r_p(E/\Q) \geq r_{\MW}(E/\Q)$ with an equality occuring if and only 
if the $p$-primary part of the Shafarevich-Tate group $\Sha(E/\Q)$ is 
finite. Thus, one obtains the following weaker conjecture: 

\begin{conjecture}\label{conj:an-corank}
If $r_{\an}(E/\Q) \geq 2$ then $r_{p}(E/\Q) \geq 2$.
\end{conjecture}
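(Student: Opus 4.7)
The plan is to derive Conjecture~\ref{conj:an-corank} from Kolyvagin's conjecture by working over an auxiliary imaginary quadratic field $K$ and then descending to $\Q$. First I would choose an imaginary quadratic field $K$ with discriminant coprime to $N$ such that (i) every prime dividing $N$ splits in $K$ (the Heegner hypothesis), and (ii) $r_{\an}(E^K/\Q)$ is as small as possible, namely $0$ when $r_{\an}(E/\Q)$ is odd and $1$ when $r_{\an}(E/\Q)$ is even. The existence of such a $K$ follows from the non-vanishing theorems of Bump--Friedberg--Hoffstein and Murty--Murty. Under the Heegner hypothesis the sign of the functional equation of $L(E/K,s) = L(E/\Q,s)\cdot L(E^K/\Q,s)$ is forced to be $-1$, so $r_{\an}(E/K)$ is odd; together with $r_{\an}(E/\Q) \geq 2$, this forces $r_{\an}(E/K) \geq 3$.

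By the Gross--Zagier formula $\hat{h}(y_K)$ is a non-zero multiple of $L'(E/K,1)$, which vanishes since $L(E/K,s)$ has order of vanishing at least three at $s=1$; hence the basic Heegner point $y_K$ is torsion, and the Kolyvagin class at trivial level is zero modulo any $p$ coprime to the torsion. Kolyvagin's conjecture asserts that \emph{some} class $c_n \in H^1(K,E[p^M])$ constructed from Heegner points over the ring class field of conductor $n$ is non-zero. Letting $\rho = \rho(E,K)$ denote the minimal number of prime factors of such an $n$, the vanishing just observed forces $\rho \geq 1$.

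Starting from a non-vanishing Kolyvagin class at level $\rho$, the Euler-system machinery yields a lower bound on the corank of the Selmer group over $K$: one obtains $\corank_{\Z_p}\Sel_{p^\infty}(E/K)^{\epsilon} \geq 2\rho + 1$ in the eigenspace of complex conjugation containing the Heegner-point system, and in particular $r_p(E/K) \geq 3$. Finally, the decomposition $\Sel_{p^\infty}(E/K) \cong \Sel_{p^\infty}(E/\Q) \oplus \Sel_{p^\infty}(E^K/\Q)$ (valid for odd $p$) gives $r_p(E/K) = r_p(E/\Q) + r_p(E^K/\Q)$. Combined with Kolyvagin's theorem applied to $E^K$, which yields $r_p(E^K/\Q) = r_{\an}(E^K/\Q) \in \{0,1\}$ for all but finitely many $p$, we conclude $r_p(E/\Q) \geq 3 - 1 = 2$.

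The main obstacle is the Euler-system corank bound of the third paragraph. Kolyvagin's conjecture is the essential input --- it guarantees the existence of a non-vanishing class at some level --- but converting this into a precise lower bound on Selmer coranks requires careful tracking of the eigenspaces under complex conjugation, of the behavior at primes dividing $2N\disc(K)$ where the Euler-system relations degenerate, and of the exact translation from ``level at which a non-vanishing class first appears'' to corank. Handling the finitely many primes $p$ excluded by Kolyvagin's theorem for $E^K$ (those where the $p$-part of $\Sha(E^K/\Q)$ is not a priori known to vanish) is a separate but routine technical issue.
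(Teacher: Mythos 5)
Your strategy matches the paper's (Corollary~\ref{cor:large-selmer}): choose an auxiliary imaginary quadratic field $K$ via the non-vanishing theorems so that $r_{\an}(E^K/\Q)\in\{0,1\}$, invoke Kolyvagin's Conjecture~\ref{conj:kolconj} to guarantee $\rho = f \geq 1$, apply Kolyvagin's structure Theorem~\ref{thm:kolselstruct} over $K$, and descend via $\Sel_{p^\infty}(E/K) = \Sel_{p^\infty}(E/\Q)\oplus\Sel_{p^\infty}(E^K/\Q)$. The problem is in your third paragraph, which you yourself flag as the main obstacle but then state incorrectly. Theorem~\ref{thm:kolselstruct} does \emph{not} give $\corank_{\Z_p}\Sel_{p^\infty}(E/K)^{\epsilon}\geq 2\rho+1$ in any single eigenspace; it gives corank exactly $f+1$ in the eigenspace $\eps(-1)^{f+1}$ and corank $r\leq f$ (with $f-r$ even) in the other. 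From $f\geq 1$ alone the total corank $(f+1)+r$ is only $\geq 2$, not $\geq 3$, and you also do not know \emph{a priori} which of the two eigenspaces is $\Sel_{p^\infty}(E/\Q)$.

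The missing step is the parity argument that pins down $f$ by comparison with the \emph{unconditionally} known corank of the $E^K$-eigenspace. For instance, in the even case one chooses $K$ with $L'(E^K/\Q,1)\neq 0$, so by Gross--Zagier and Kolyvagin's unconditional theorem that eigenspace has corank exactly $1$. Matching against the two slots $\{f+1,\,r\}$: the option $f+1=1$ forces $f=0$, contradicting that $y_K$ is torsion; hence $r=1$, $f$ is odd with $f\geq 1$, and the \emph{other} eigenspace (which is $\Sel_{p^\infty}(E/\Q)$) has corank $f+1\geq 2$. This matching is what makes the argument work, and without it the deduction ``$r_p(E/K)\geq 3$'' in your paragraph three does not follow from the (corrected) corank statement. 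A secondary imprecision: Kolyvagin's unconditional theorem gives $r_p(E^K/\Q)=r_{\an}(E^K/\Q)$ for every $p$ satisfying the standing hypotheses on $\rhobar_{E,p}$, not merely ``for all but finitely many $p$'' --- and since the conjecture you are proving concerns a fixed $p$, one must take the same $p$ throughout rather than discard an unspecified finite set.
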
    

For elliptic curves $E$ of arbitrary analytic rank, Kolyvagin was able 
to explain the exact structure of the Selmer group $\Sel_{p^\infty}(E/\Q)$ 
in terms of 
Heegner points and the associated cohomology classes under a conjecture  
 about 
the non-triviality of these classes (see \cite[Conj.A]{kolyvagin:structure_of_selmer}). Unfortunately, Kolyvagin's conjecture appears to be extremely difficult to prove. 
Until the present paper, 
there has been no example of an elliptic curve over~$\Q$ of rank at least~$2$ for 
which the conjecture has been verified.  

In this paper, we present a complete algorithm to compute  
Kolyvagin's cohomology classes by explicitly computing the corresponding Heegner points 
over ring class fields. We use this algorithm to verify Kolyvagin's conjecture for the 
first time for elliptic curves of analytic rank two. We also explain 
(see Corollary~\ref{cor:large-selmer}) how Kolyvagin's conjecture implies 
Conjecture~\ref{conj:an-corank}. In addition, we use methods of Christophe Cornut 
(see~\cite{cornut:compte-rendus}) to provide theoretical evidence for Kolyvagin's conjecture. Finally, as a separate application of the explicit computation of Heegner
points, we construct nontrivial cohomology classes in the
Shafarevich-Tate group $\Sha(E/K)$ of elliptic curves $E$ over certain quadratic 
imaginary fields. 

The paper is organized as follows. Section~\ref{sec:heegpts}
introduces Heegner points over ring class fields and Kolyvagin
cohomology classes. We explain the method of computation and
illustrate them with two examples. In Section~\ref{sec:kolconj} we
state Kolyvagin's conjecture, discuss Kolyvagin's work on Selmer
groups and establish Conjecture~\ref{conj:an-corank} as a corollary. Moreover, 
we present a proof of the theoretical evidence following closely 
Cornut's arguments.
Section~\ref{subsec:389A1} contains the essential examples for which
we manage to explicitely verify the conjecture. Finally, in
Section~\ref{sec:expl-sha} we apply our computational techniques to
produce explicit non-trivial elements in the Shafarevich-Tate 
groups for specific elliptic curves.
        
\vspace{2ex}
{\bf Acknowledgment.} We are indebted to Christophe Cornut for sharing his thoughts 
on the Kolyvagin's conjecture. We would like to thank Jan Nekov\'{a}\v{r} for his 
helpful discussions on height bounds, Stephen D. Miller and Mark Watkins for 
pointing out analytic methods for estimating special values of derivatives of 
automorphic $L$-functions, Henri Darmon and David Jao for useful conversations on 
Heegner points computations, and Ken Ribet for reading the preliminary draft and for 
useful conversations.

\section{Heegner points over ring class fields}\label{sec:heegpts}
We discuss Heegner points over ring class fields in Section~\ref{subsec:heegdefs} and describe a method for computing them in Section~\ref{subsec:heegptscomp}. Height 
estimates for these points are given in the appendix. We illustrate the method with some examples in Section~\ref{subsec:ptsexamples}. The standard references are \cite{gross:kolyvagin}, \cite{kolyvagin:euler_systems} and~\cite{mccallum:kolyvagin}.

\subsection{Heegner points over ring class fields}\label{subsec:heegdefs}
Let $E$ be an elliptic curve over $\Q$ of conductor $N$ and let $K =
\Q(\sqrt{-D})$ for some fundamental discriminant $-D < 0$, $D \neq 3, 4$, such that 
all prime factors of $N$ are split in $K$. We refer to such a discriminant as a  
\emph{Heegner discriminant} for $E/\Q$. Let $\cO_K$ be the ring of integers of $K$.  
It follows that $N\cO_K = \cN\bar{\cN}$ for an ideal $\cN$ of $\cO_K$ with 
$\cO_K / \cN \simeq \Z / N\Z$. 

By the modularity theorem (see~\cite{breuil-conrad-diamond-taylor}), there exists 
a modular parameterization $\varphi : X_0(N) \ra E$. 
Let $\cN^{-1}$ be the
fractional ideal of $\cO_K$ for which $\cN \cN^{-1} = \cO_K$.
We view $\cO_K$ and $\cN$ as
$\Z$-lattices of rank~$2$ in $\C$ and observe that $\C / \cO_K \ra \C /
\cN^{-1}$ is a cyclic isogeny of degree $N$ between the elliptic
curves $\C / \cO_K$ and $\C / \cN^{-1}$.  This
isogeny corresponds to a complex point $x_1 \in X_0(N)(\C)$. According to
the theory of complex multiplication~\cite[Ch.II]{silverman:aec2}, the
point $x_1$ is defined over the Hilbert class field $H_K$ of $K$.

More generally, for an integer $c$, 
let $\cO_c = \Z + c\cO_K$ be the order of conductor $c$ in
$\cO_K$ and let $\cN_c = \cN \cap \cO_c$, which is an invertible ideal of $\cO_c$. 
Then $\cO_c / \cN_c \simeq \Z / N\Z$ and the map $\C / \cO_c \ra \C / \cN_c^{-1}$ 
is a cyclic isogeny of degree $N$. Thus, it defines a point $x_c \in X_0(N)(\C)$.
By the theory of complex multiplication, this point is defined
over the ring class field $K[c]$ of conductor $c$ over $K$ (that is, the 
unique abelian extension of $K$ corresponding to the norm subgroup 
$\widehat{\cO_c}^\times K^\times \subset \widehat{K}^\times$; e.g., if $c = 1$ then 
$K[1] = H_K$). 
 
We use the parameterization $\varphi : X_0(N) \ra E$ to obtain points
$$
  y_c = \varphi (x_c) \in E(K[c]).
$$ 
Let $y_K = \Tr_{H_K / K}(y_1)$. We refer to $y_K$ as the
{\em Heegner point} for the discriminant~$D$, even though it is only
well defined up to sign and torsion (if $\cN'$ is another ideal with 
$\cO / \cN' \simeq \Z / N\Z$ then the new Heegner point differs from $y_K$ by at most 
a sign change and a rational torsion point). 

\subsection{Explicit computation of the points $y_c$}\label{subsec:heegptscomp}
Significant work has been done on explicit calculations of Heegner points on 
elliptic curves (see~~\cite{cohen:nt},~\cite{delauney:phd},
~\cite{elkies:heegner},~\cite{watkins:heegner}). Yet, all of these compute only 
the points $y_1$ and $y_K$. In~\cite{ejl:cassels} explicit computations of the 
points $y_c$ were considered in several examples and some difficulties were outlined.  

To compute the point $y_c = [\C / \cO_c \ra \C / \cN_c^{-1}] \in E(K[c])$ we let $f \in S_2(\Gamma_0(N))$ be the newform corresponding to the elliptic curve $E$ and $\Lambda$ be 
the complex lattice (defined up to homothety), such that $E \isom \C / \Lambda$. Let 
$\mathfrak h^\times = \mathfrak h \cup \mathbb{P}^1(\Q) \cup \{i \infty\}$, where $\mathfrak h = \{z \in \C : \Im(z) > 0\}$, equipped with the action 
of $\Gamma_0(N)$ by linear fractional transformations. The modular parametrization $\vphi : X_0(N) \ra E$ is then given by the function 
$\varphi : \h^\times \ra \C / \Lambda$ 
\begin{equation}\label{eqn:modparam}
\varphi(\tau) = \int_{\tau}^{i \infty} f(z) dz 
      = \sum_{n \geq 1} \frac{a_n}{n}e^{2\pi i n \tau},    
\end{equation}
where $\ds f = \sum_{n = 1}^\infty a_n q^n$ is the Fourier expansion of the modular 
form $f$. 

We first compute ideal class representatives $\mathfrak a_1, \mathfrak a_2, \dots, \mathfrak a_{h_c}$ for the Picard group 
$\Pic(\cO_c) \isom \Gal(K[c]/K)$, where $h_c = \# \Pic(\cO_c)$. Let $\sigma_i \in \Gal(K[c]/K)$ be the image of the the ideal class of $\mathfrak a_i$ under the Artin map. Thus, we can use the ideal $\mathfrak a_i$ to compute a complex number $\tau_i \in \h$ representing the CM point $\sigma_i(x_c)$ for each $i = 1, \dots, h_c$ (since 
$X_0(N) = \Gamma_0(N) \backslash \h^\times$). Explicitly, the Galois conjugates of 
$x_c$ are 
$$
\sigma_i(x_c) = [\C / \mathfrak a_i^{-1} \ra \C / \mathfrak a_i^{-1} \cN_c^{-1}],\ \forall i = 1, \dots, h_c.  
$$
 Next, we can use (\ref{eqn:modparam}) to approximate $\vphi(\sigma_i(x_c))$ as an element 
of $\C / \Lambda$ by truncating the infinite series. Finally, the image of $\vphi(\tau_i) + \Lambda$ under the Weierstrass $\wp$-function 
gives us an approximation of the $x$-coordinate of the point $y_c$ on the Weierstrass model of the elliptic curve $E$. On the other hand, 
this coordinate is $K[c]$-rational. Thus, if we compute the map (\ref{eqn:modparam}) with 
sufficiently many terms and up to high enough floating point accuracy, we must 
be able to recognize the correct $x$-coordinate of $y_c$ on the Weierstrass model as an element of 
$K[c]$. 

To implement the last step, we use the upper bound established  
on the logarithmic height 
of the Heegner point $y_c$ (given in the appendix). The bound on the logarithmic height 
comes from a bound on the canonical height combined with bounds on the height difference 
(see the appendix for complete details). 
Once we have a height bound, we estimate the floating point accuracy required for the computation. Finally, we estimate the number of terms 
of (\ref{eqn:modparam}) necessary to compute the point $y_c$ up to the corresponding 
accuracy (see~\cite[p.591]{cohen:nt}).   

\begin{remark}
In practice, there are two ways to implement the above algorithm. The first approach is to compute an approximation $x_i$ of the $x$-coordinates of 
$y_c^{\sigma_i}$ for every $i = 1, \dots, c$ and form the polynomial $F(z) = \prod_{i = 1}^{h_c} (z - x_i)$. 
The coefficients of this polynomial are very close to the rational coefficients of the minimal polynomial of the actual $x$-coordinate of $y_c$.  
Thus, one can try to recognize the coefficients of $F(z)$ by using the continued fractions method. The second approach is to search for the $\tau_i$ with 
the largest imaginary part (which will make the convergence of the corresponding series (\ref{eqn:modparam}) defining the modular parametrization fast) and then 
try to search for an algebraic dependence of degree $[K[c] : K]$ using standard algorithms implemented in PARI/GP. Indeed, computing a conjugate with a smaller imaginary part 
might be significantly harder since the infinite series in (\ref{eqn:modparam}) will 
converge slower and one will need more terms to compute the image up to the 
required accuracy.     
\end{remark}   

\begin{remark}
We did {\em not} actually implement an algorithm for computing bounds
on heights of Heegner points as described in the appendix of this
paper.  Thus the computations below are not provably correct, though
we did many
consistency checks, and our computational observations
are almost certainly correct.  The primary goal of the examples
and practical implementation of our algorithm is to provide
tools and data for improving our theoretical understanding of
Kolyvagin's conjecture, and not making the computations below
provably correct does not detract from either of these goals.
\end{remark}
  
\subsection{Examples}\label{subsec:ptsexamples}
We compute the Heegner points $y_c$ for specific elliptic curves and choices of quadratic imaginary fields. 

\vspace{0.1in} 
 
\noindent {\bf \rm{\bf 53A1}:} Let ${E}_{/\Q}$ be the elliptic curve with label \textrm{\bf 53A1} in Cremona's database. Explicitly, $E$ is the curve
{\small   
$y^2 + xy + y = x^3 - x^2.$
}
\noindent Let $D = 43$ and $c = 5$. The conductor of $E$ is $53$ which is split 
in $K = \Q(\sqrt{D})$, so $D$ is a Heegner discriminant for $E$. The modular form associated 
to $E$ is   
{\small 
$
f_{E}(q) = q - q^2  - 3q^3 - q^4 + 3q^6 - 4q^7 + 3q^8 + 6q^9 + \cdots . 
$
}
\noindent One applies the methods from Section~\ref{subsec:heegptscomp} to compute the minimal polynomial of the $x$-coordinate 
of $y_5$ for the above model 
{\small   
$$
F(x) = x^6 - 12x^5 + 1980x^4 - 5855x^3 + 6930x^2 - 3852x + 864.
$$
}
\noindent Since $F(x)$ is an irreducible polynomial over $K$, it generates the ring class field $K[5]/K$, i.e., 
$K[5] = K[\alpha] \isom K[x] / \langle F(x) \rangle$, where $\alpha$ is one of the roots. To find the $y$-coordinate of $y_5$ we substitute $\alpha$ into the equation of 
$E$ and factor the resulting quadratic polynomial over $K[5]$ 
to obtain that the point $y_5$ is equal to   
{\small
$$
\left( \alpha , -4/315\alpha^5 + 43/315 \alpha^4 - 7897/315\alpha^3 + 2167/35 \alpha^2 
- 372/7\alpha + 544/35 \right ) \in E(K[5]).   
$$ 
}

\vspace{0.1in}

\noindent {\bf \rm{\bf 389A1:}}
The elliptic curve with label \textrm{\bf 389A1} is
{\small 
$
y^2 + y = x^3 + x^2 - 2x
$
}
and the associated modular form   
{\small 
$
f_{E}(q) = q - 2q^2 -2q^3 +2q^4 -3q^5 + 4q^6 -5q^7 +q^9 + 6q^{10} + \cdots. 
$
}
Let $D = 7$ (which is a Heegner discriminant for $E$) and $c = 5$. As above, we 
compute the minimal polynomial of the $x$-coordinate of $y_{5}$
{\small 
$$
F(x) = x^6 + \frac{10}{7}x^5 - \frac{867}{49}x^4 - \frac{76}{245}x^3 + \frac{3148}{35}x^2 - 
\frac{25944}{245}x + \frac{48771}{1225}. 
$$
}
If $\alpha$ is a root of $F(x)$ then $y_5 = (\alpha,\beta)$ where 
{\small
\begin{eqnarray*}
\beta &=& \frac{280}{7761}\sqrt{-7}\alpha^5 + \frac{1030}{7761}\sqrt{-7}\alpha^4 - 
\frac{12305}{36218}\sqrt{-7}\alpha^3 - \frac{10099}{15522}\sqrt{-7}\alpha^2 \\ 
&+& \frac{70565}{54327}\sqrt{-7}\alpha + \frac{- 18109 -33814\sqrt{-7} }{36218}. 
\end{eqnarray*} 
}

\vspace{0.1in}

\noindent {\bf \rm{\bf 709A1:}} The curve \textrm{\bf 709A1} with equation  
{\small 
$
y^2 + y = x^3 - x^2 - 2x
$
}
has associated modular form  
{\small 
$
f_{E}(q) =  q - 2q^2 - q^3 + 2q^4 - 3q^5 + 2q^6 - 4q^7 - 2q^9 + \cdots. 
$
}
Let $D = 7$ (a Heegner discriminant for $E$) and $c = 5$. The minimal polynomial 
of the $x$-coordinate of $y_5$ is  
{\small 
$
F(x) = \frac{1}{5^2\cdot 7^2 \cdot 19^2} \left ( 442225x^6 - 161350x^5 - 2082625x^4 - 387380x^3 + 2627410x^2 + 18136030x + 339921 \right ),  
$
}
and if $\alpha$ is a root of $x$ then $y_5 = (\alpha, \beta)$
{\small 
\begin{eqnarray*}
\beta &=& \frac{341145}{62822}\sqrt{-7}\alpha^5 - \frac{138045}{31411}\sqrt{-7}\alpha^4 - \frac{31161685}{1319262}\sqrt{-7}\alpha^3 + 
\frac{7109897}{1319262}\sqrt{-7}\alpha^2 + \\ 
&+& \frac{39756589}{1319262}\sqrt{-7}\alpha + \frac{- 219877+ 4423733\sqrt{-7}}{439754}. 
\end{eqnarray*}
}
\vspace{0.1in}

\noindent {\bf \rm{\bf 718B1:}} The curve \textrm{\bf 718B1} has equation 
{\small 
$
\ y^2 + xy + y = x^3 - 5x
$
}
with associated modular form 
{\small 
$
f_E(q) = q - q^2 - 2q^3 + q^4 - 3q^5 + 2q^6 - 5q^7 - q^8 + q^9 + 3q^{10} + \dots. 
$
}
Again, for $D = 7$ and $c = 5$ we find    
{\small 
$
F(x) = \frac{1}{3^4\cdot 5^2}\left (2025x^6 + 12400x^5 + 32200x^4 + 78960x^3 + 289120x^2 + 622560x + 472896 \right )  
$
}
and $y_5 = (\alpha, \beta)$ with 
{\small 
\begin{eqnarray*}
\beta &=& \frac{16335}{12271}\sqrt{-7}\alpha^5 + \frac{206525}{36813}\sqrt{-7}\alpha^4 + \frac{54995}{5259}\sqrt{-7}\alpha^3 + 
\frac{390532}{12271}\sqrt{-7}\alpha^2 + \\
&+& \frac{- 36813 + 9538687\sqrt{-7}}{73626}\alpha + \frac{- 12271 + 4018835\sqrt{-7}}{24542}. 
\end{eqnarray*}
}

\section{Kolyvagin's conjecture: consequences and evidence}\label{sec:kolconj} 
We briefly recall Kolyvagin's construction of the cohomology classes in Section~\ref{subsec:kolclasses} and state 
Kolyvagin's conjecture in Section~\ref{subsec:kolconj}. Section~\ref{subsec:selmer} is devoted to the proof of the promised 
consequence regarding the $\Z_p$-corank of the Selmer group of an elliptic curve with 
large analytic rank. In Section~\ref{subsec:cornut} we provide 
Cornut's arguments for the theoretical evidence for Kolyvagin's conjecture and finally, in Section~\ref{subsec:389A1} we verify Kolyvagin's conjecture 
for particular elliptic curves. Throughout the entire section we assume that 
$E_{/\Q}$ is an elliptic curve of conductor $N$, $D$ is a Heegner discriminant for $E$ and $p \nmid ND$ is a prime such that the mod $p$ Galois representation 
$\rhobar_{E, p} : \Gal(\Qbar / \Q) \ra \Aut(E[p])$ is surjective. 

\subsection{Preliminaries}
Most of this section follows the exposition in~\cite{gross:kolyvagin},~\cite{mccallum:kolyvagin} and~\cite{kolyvagin:structureofsha}.

\vspace{0.1in}

\noindent \textit{1. Kolyvagin primes.} We refer to a prime number $\ell$ as a \emph{Kolyvagin prime} if $\ell$ is inert in
$K$ and $p$ divides both $a_\ell$ and $\ell+1$). For a Kolyvagin prime $\ell$ let
$$
  M(\ell) = \ord_p(\gcd(a_\ell, \ell+1)).
$$ 
We denote by $\Lambda^r$ the set of all square-free products of 
exactly $r$ Kolyvagin primes and let 
$\ds \Lambda = \bigcup_r \Lambda^r$. 
For any $c \in \Lambda$, let 
$\ds M(c) = \min_{\ell \mid c} M(\ell)$. Finally, let 
$$
\Lambda^r_m = \{c \in \Lambda^r : M(c) \geq m\}
$$ 
and let $\ds \Lambda_m = \bigcup_{r} \Lambda_m^r$. 

\vspace{0.1in}

\noindent \textit{2. Kolyvagin derivative operators.} 
Let $\cG_c= \Gal(K[c]/K)$ and $G_c = \Gal(K[c]/K[1])$. 
For each $\ell \in \Lambda^1$, the group $G_\ell$ is cyclic of order 
$\ell + 1$. Indeed, 
$$
G_\ell \simeq (\cO_K / \ell \cO_K)^\times / (\Z / \ell \Z)^\times \simeq \F_\lambda^\times / \F_\ell^\times. 
$$
Moreover, $\ds G_c \isom \prod_{\ell \mid c} G_\ell$ (since 
$\Gal(K[c]/K[c/\ell])\isom G_\ell$). 
Next, fix a generator $\sigma_\ell$ of $G_\ell$ for each $\ell \in \Lambda^1$. Define 
$D_\ell = \sum_{i = 1}^\ell i \sigma_\ell^i \in \Z[G_\ell]$ and let 
$$
D_c = \prod_{\ell \mid c} D_\ell \in \Z[G_c]. 
$$
Note that $(\sigma_\ell - 1) D_\ell = 1 + \ell - \Tr_{K[\ell]/K[1]}$. 

We refer to $D_c$ as the \emph{Kolyvagin derivative operators}. Finally, let $S$ be a set of coset representatives 
for the subgroup $G_c \subseteq \cG_c$. Define 
$$
P_c = \sum_{s \in S} s D_c y_c \in E(K[c]).  
$$ 
The points $P_c$ are derived from the points $y_c$, so we will refer to them as \emph{derived Heegner points}. 

\vspace{0.1in}

\noindent \textit{3. The function $m : \Lambda \ra \Z$ and the sequence $\ds \{m_r\}_{r \geq 0}$.}
For any $c \in \Lambda$ let $m'(c)$ be the largest positive integer such that
$P_c \in p^{m'(c)}E(K[c])$ (if $P_c$ is torsion then $m'(c) = \infty$). 
Define a function $m : \Lambda \ra \Z$ by 
$$
m(c) = \left \{ 
\begin{array}{ll} 
m'(c) & \textrm{ if }m'(c) \leq M(c), \\
\infty & \textrm{ otherwise.}
\end{array} \right . 
$$ 
Finally, let $\ds m_r = \min_{c \in \Lambda^r} m(c)$. 

\begin{proposition}\label{prop:inequal}
The sequence $\{m_r\}_{r \geq 0}$ is non-increasing, i.e., 
$m_r \geq m_{r+1}$.  
\end{proposition}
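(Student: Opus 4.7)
It suffices to show that for each $c \in \Lambda^r$ there is a Kolyvagin prime $\ell \nmid c$ with $m(c\ell) \le m(c)$; taking the minimum over $c \in \Lambda^r$ then yields $m_{r+1} \le m_r$. The case $m(c) = \infty$ is trivial, so set $m := m(c) < \infty$, so that $m \le M(c)$ and $P_c = p^m Q_c$ for some $Q_c \in E(K[c]) \setminus p E(K[c])$. The goal is to build $\ell$ such that $P_{c\ell}$ has $p$-divisibility at most $m$ in $E(K[c\ell])$, while simultaneously $M(\ell) \ge m$ so that the truncation in the definition of $m(\cdot)$ does not obstruct the comparison.

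The prime $\ell$ is produced via the Chebotarev density theorem applied to the compositum $F = K[c] \cdot \Q(E[p^{m+1}]) \cdot K[c](p^{-1} Q_c)$, where $p^{-1} Q_c$ denotes any $p$-division point of $Q_c$. Using the surjectivity of $\rhobar_{E,p}$ (which lifts to surjectivity at level $p^{m+1}$ by the standard Nakayama argument), I select $\ell \nmid N D c$ such that $\Frob_\ell$ acts on $E[p^{m+1}]$ as complex conjugation --- automatically forcing $\ell$ inert in $K$ and $p^{m+1} \mid \gcd(\ell+1, a_\ell)$, so that $\ell$ is a Kolyvagin prime with $M(\ell) \ge m+1$ --- while the reduction of $Q_c$ at some prime $\lambda$ of $K[c]$ above $\ell$ has non-zero image in $\tE(\F_\lambda)/p$. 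The two conditions are compatible precisely because $Q_c \notin p E(K[c])$, which by the surjectivity hypothesis makes the Galois action on $p^{-1} Q_c$ sufficiently independent of the action on $E[p^{m+1}]$.

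For such an $\ell$ we have $M(c\ell) \ge \min(M(c), M(\ell)) \ge m$, so $m(c\ell) = m'(c\ell)$ as soon as $m'(c\ell) \le m$. The key input is Kolyvagin's local reciprocity law at the new prime $\ell$: the cohomology class $\kappa(c\ell) \in \HH^1(K, E[p^{m+1}])$ attached to $P_{c\ell}$ localizes at $\ell$ in the unramified part to (up to a $p$-adic unit) the image of $Q_c$ modulo $\lambda$ in $\tE(\F_\lambda)/p$; by the choice of $\ell$ this image is non-zero, hence $\kappa(c\ell) \neq 0$. Since the construction of $\kappa(c\ell)$ would force it to vanish if $P_{c\ell}$ lay in $p^{m+1} E(K[c\ell])$, we conclude $m'(c\ell) \le m$, and therefore $m(c\ell) = m'(c\ell) \le m = m(c)$.

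The principal obstacle is a precise formulation of the local reciprocity law used in the last step: one must decompose $\HH^1(K_\lambda, E[p^{m+1}])$ into its finite (unramified) and singular summands --- available precisely because $\ell$ is inert in $K$ with $p \mid \ell + 1$ --- and carefully track how the derivative $D_\ell$ interacts with $\Frob_\ell$ when reducing $P_{c\ell}$ modulo a prime above $\ell$. This computation is the technical core of Kolyvagin's construction, and it is where the defining properties of Kolyvagin primes (inertness in $K$ together with $p \mid \gcd(\ell+1, a_\ell)$) enter in an essential way.
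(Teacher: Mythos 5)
The paper offers no proof of Proposition~2.1 at all: it simply cites Kolyvagin's Theorem~C from~\cite{kolyvagin:structureofsha}. So your write-up is necessarily a from-scratch reconstruction, and the overall shape you chose --- Chebotarev to manufacture a Kolyvagin prime $\ell$ adapted to $Q_c$, then Kolyvagin's local finite/singular comparison at $\ell$ to propagate non-divisibility from $P_c$ to $P_{c\ell}$ --- is indeed the structure of Kolyvagin's argument. That said, two points need repair before the sketch could become a proof.

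First, a likely slip in the local identity. You say $\kappa(c\ell)$ ``localizes at $\ell$ in the unramified part'' to the reduction of $Q_c$. Kolyvagin's relation goes the other way around: it identifies the \emph{singular} (transverse) component $\mathrm{loc}^s_\lambda(\kappa_{c\ell,n})$ with a $p$-adic unit multiple of the \emph{finite} (unramified) component $\mathrm{loc}^f_\lambda(\kappa_{c,n})$, the latter being the image of $P_c$ under reduction at $\lambda$ in $\tE(\F_\lambda)/p^n$. The class $\kappa_{c\ell,n}$ is precisely the one that is ramified at $\ell$; its unramified part is not the invariant that sees $Q_c$. Since in the last paragraph you correctly describe the finite/singular decomposition, I read this as a slip rather than a conceptual error, but as written the statement of the reciprocity law is wrong.

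Second, and more substantially, there is a gap at the boundary $M(c)=m(c)$. You invoke a class $\kappa(c\ell)\in\HH^1(K,E[p^{m+1}])$, but the cocycle defining such a class exists only for $c\ell\in\Lambda_{m+1}$, i.e.\ when $M(c\ell)\ge m+1$; this is needed so that $(\sigma-1)P_{c\ell}\in p^{m+1}E(K[c\ell])$ for $\sigma\in\cG_{c\ell}$, and that divisibility comes from $p^{M(\ell')}\mid(\sigma_{\ell'}-1)D_{c\ell}y_{c\ell}$ for each $\ell'\mid c\ell$. Now $M(c\ell)=\min(M(c),M(\ell))$, and the hypothesis $m(c)<\infty$ only gives $m=m(c)\le M(c)$; equality $M(c)=m$ is allowed, and then $M(c\ell)=m<m+1$ no matter how you choose $\ell$, so the level-$p^{m+1}$ Kolyvagin class never exists. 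You explicitly note ``$M(c\ell)\ge m$'' --- which is correct, but not the $\ge m+1$ your construction actually needs. As written the argument establishes $m_{r+1}\le m_r$ only under the extra assumption that the minimum $m_r$ is attained at some $c$ with $M(c)>m(c)$. To close the gap one should avoid the level-$(m{+}1)$ cohomology class altogether and instead argue directly from the Euler-system congruence relating the reduction of $P_{c\ell}$ at a prime above $\ell$ to the reduction of $P_c$, which is enough to force $P_{c\ell}\notin p^{m+1}E(K[c\ell])$ once $\ell$ has been chosen as you describe.

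Finally, a presentational point: it suffices to run the argument for a single $c\in\Lambda^r$ attaining $m_r$ (if $m_r=\infty$ there is nothing to prove), rather than for every $c\in\Lambda^r$; this is what you implicitly do anyway, but the opening sentence overstates what is needed.
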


\begin{proof}
This is proved in~\cite[Thm.C]{kolyvagin:structureofsha}.
\end{proof}

\subsection{Kolyvagin cohomology classes}\label{subsec:kolclasses}
Kolyvagin uses the points $P_c$ to construct classes $\kappa_{c, m} \in \H^1(K, E[p^m])$ for any $c \in \Lambda_m$. For the details of the construction, 
we refer to~\cite[pp.241-242]{gross:kolyvagin}) and~\cite[\S 4]{mccallum:kolyvagin}. 
The class $\kappa_{c, m}$ is explicit, in the sense that it is represented by the 1-cocycle
\begin{eqnarray}\label{eqn:cocycle}
\sigma \mapsto \sigma \left (\frac{P_c}{p^m} \right ) - \frac{P_c}{p^m} - \frac{(\sigma - 1)P_c}{p^m}, 
\end{eqnarray}
where $\ds \frac{(\sigma - 1)P_c}{p^m}$ is the unique $p^m$-division point of $(\sigma-1)P_c$ in $E(K[c])$ (see~\cite[Lem.~4.1]{mccallum:kolyvagin}). 
The class $\kappa_{c, m}$ is non-trivial if and only if $P_c \notin p^m E(K[c])$ 
(which is equivalent to $m > m(c)$). 

Finally, let $-\eps$ be the sign of the functional equation corresponding to $E$. 
For each $c \in \Lambda_m$, let 
$\eps(c) = \eps \cdot (-1)^{f_c}$ where $f_c = \# \{\ell : \ell \mid c\}$ 
(e.g., $f_1 = 0$). It follows from~\cite[Prop.5.4(ii)]{gross:kolyvagin} that 
$\kappa_{c, m}$ lies in the $\eps(c)$-eigenspace for the action of complex conjugation on $\H^1(K, E[p^m])$.

\subsection{Statement of the conjecture}\label{subsec:kolconj}
We are interested in $\ds m_\infty = \min_{c \in \Lambda} m(c) = \lim_{r \ra \infty} m_r$. In the case when 
the Heegner point $P_1 = y_K$ has infinite order in $E(K)$, the Gross-Zagier formula 
(see~\cite{gross-zagier}) implies that $E(K)$ has rank~$1$, i.e., $m_0 < \infty$ as it is $\ord_p([E(K) : \Z y_K])$. 
In that case, $m_\infty < \infty$, so the system of cohomology classes 
$$
  T = \{\kappa_{c, m} : m \leq M(c)\}
$$ 
is nonzero.
A much more interesting and subtle is the case of 
an elliptic curves $E$ over $K$ of rank at least $2$. 
Kolyvagin conjectured (see~\cite[Conj.C]{kolyvagin:structure_of_selmer}) that in 
all cases $T$ is non-trivial.  

\begin{conjecture}[Kolyvagin's conjecture]\label{conj:kolconj}
We have $m_\infty < \infty$, i.e., $T$ is non-trivial.
\end{conjecture}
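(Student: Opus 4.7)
The plan is to argue by contradiction: suppose $m_\infty = \infty$, so that by the monotonicity $m_r \geq m_{r+1}$ of Proposition~\ref{prop:inequal} every derived point $P_c$ with $c \in \Lambda$ is infinitely $p$-divisible in $E(K[c])$. Since $E(K[c])$ is finitely generated, this forces $P_c$ to be torsion (and indeed zero in $E(K[c]) \otimes \Z_p$), so by the explicit formula (\ref{eqn:cocycle}) the entire Kolyvagin system $T$ vanishes. The goal is then to contradict this conclusion by exhibiting a single $c \in \Lambda$ for which $P_c$ fails to be $p$-divisible.

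The first step is to exploit the Euler system distribution relations. For a Kolyvagin prime $\ell$, Kolyvagin's congruences (as reworked by Gross) relate $P_{c\ell}$ modulo $\ell$ to $a_\ell P_c - \Frob_\ell P_c$ in the reduction of $E$ at a prime above $\ell$; since $p \mid a_\ell$ and $p \mid \ell+1$, this descends to a clean mod-$p$ recursion along chains $1 \mid \ell_1 \mid \ell_1\ell_2 \mid \cdots$. Iterating these relations translates the assumed universal $p$-divisibility of the $P_c$ into a rigid algebraic constraint on the reductions of the \emph{raw} Heegner points $y_c$: after projection by the derivation-compatible Hecke operators, their images must land in a proper $\GQ$-stable submodule of $E[p](\Fqbar)$.

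The second step is to invoke equidistribution of Heegner points, as indicated in Section~\ref{subsec:cornut}. Cornut and Vatsal show that, as $c$ ranges over products of an increasing number of Kolyvagin primes, the Galois orbits $\{y_c^\sigma : \sigma \in \cG_c\}$ equidistribute modulo $p$ against natural measures on the supersingular locus of the reduction of $X_0(N)$. Since $\rhobar_{E,p}$ is surjective by hypothesis, $E[p]$ admits no proper $\GQ$-stable submodule, so the combination of equidistribution with the previous recursion becomes incompatible with the assumption that every $P_c$ is $p$-divisible, yielding the desired contradiction.

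The main obstacle is the gap between equidistribution of the \emph{undifferentiated} points $y_c$ and non-vanishing of the \emph{derived} points $P_c = \sum_{s \in S} s D_c y_c$: the operator $D_c = \prod_{\ell \mid c} D_\ell$ with $D_\ell = \sum_{i=1}^{\ell} i \sigma_\ell^i$ is a character-weighted sum whose averages can cancel in ways that raw equidistribution does not detect. Bridging this gap requires an equidistribution or nonvanishing statement for $y_c$ twisted by characters of $G_c$ of maximal $p$-power order, which goes strictly beyond what Vatsal and Cornut currently provide. I would set this up as an induction on $r$: Proposition~\ref{prop:inequal} reduces Kolyvagin's conjecture to showing that the non-increasing sequence $\{m_r\}$ stabilizes at a finite value, and a suitable twisted equidistribution — plausibly obtainable via passage to a definite quaternion algebra through Jacquet-Langlands and subconvex bounds for the relevant automorphic $L$-functions — would guarantee that stabilization. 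Constructing and controlling this twisted equidistribution is the principal analytic difficulty and the reason the conjecture remains open in general.
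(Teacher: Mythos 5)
This statement is a \emph{conjecture}, not a theorem; the paper does not prove it, so there is no internal proof to compare against. The paper's contributions around Conjecture~\ref{conj:kolconj} are (a)~Cornut's evidence in Proposition~\ref{prop:evidence}, which shows that for all but finitely many $c$ the point $P_c$ is non-torsion, and (b)~explicit numerical verification for three specific rank-two curves in Section~\ref{subsec:389A1}. Neither constitutes a proof, and the paper's remark following Proposition~\ref{prop:evidence} explicitly flags why (a)~falls short.

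Your opening reduction contains a concrete error that undermines the rest of the argument. You write that $m_\infty = \infty$ forces every $P_c$ to be infinitely $p$-divisible, hence torsion. But $m(c) = \infty$ does \emph{not} mean $m'(c) = \infty$: by definition, $m(c)$ is the truncation of $m'(c)$ at the threshold $M(c)$, so $m(c) = \infty$ only means $P_c \in p^{M(c)+1}E(K[c])$, a finite divisibility condition. A non-torsion $P_c$ can perfectly well satisfy this — indeed that is precisely the failure mode highlighted in the paper's remark after Proposition~\ref{prop:evidence}: ``even if one gets non-torsion points $P_c$, it might still happen that for each such $c$ we have $e_c > M(c)$ in which case all classes $\kappa_{c,m}$ with $m \leq M(c)$ will be trivial.'' Your claimed contradiction (non-torsion of some $P_c$) would therefore not contradict $m_\infty = \infty$; it would only recover what Cornut already shows. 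The real problem, which you correctly identify later, is to compare the $p$-adic valuation $m'(c)$ against $M(c)$, and the equidistribution results of Cornut and Vatsal — which are about characters of order prime to $p$, or of bounded $p$-part — do not control this. That identified gap (twisted equidistribution against characters of large $p$-power order, and quantitative enough to beat the cutoff $M(c)$) is exactly why this remains open; your proposal recognizes this, but the faulty first reduction makes the contradiction you aim for weaker than you state.
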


\begin{remark}
Kolyvagin's conjecture is obvious in the case of elliptic curves of analytic rank 
one over $K$ since $m_0 < \infty$ (which follows from Gross-Zagier's formula). Still, 
it turns out that the $p$-part of the Birch and Swinnerton-Dyer conjectural formula 
is equivalent to $\ds m_\infty = \ord_p \left (\prod_{q \mid N} c_q \right )$, where 
$c_q$ is the Tamagawa number of $E_{/\Q}$ at $q$. See~\cite{jetchev:tamagawa} for some 
new results related to this question which imply (in many cases) the exact upper bounds 
on the $p$-primary part of the Shafarevich-Tate group as predicted by the BSD formula. 
\end{remark}

\subsection{A consequence on the structure of Selmer groups}\label{subsec:selmer}

\begin{theorem}[Kolyvagin]\label{thm:kolselstruct}
Assume Conjecture~\ref{conj:kolconj} and let $f$ be the smallest nonnegative 
integer for which $m_f < \infty$. Then 
$$
\Sel_{p^\infty}(E/K)^{\eps(-1)^{f+1}} \isom (\Q_p / \Z_p)^{f+1} 
\oplus \textrm{(a finite group)}
$$
and 
$$
\Sel_{p^\infty}(E/K)^{\eps(-1)^f} \isom (\Q_p/\Z_p)^r \oplus 
   \textrm{(a finite group)}  
$$ 
where $r \leq f$ and $f - r$ is even.
\end{theorem}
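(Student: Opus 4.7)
The plan is to combine the nontriviality provided by Kolyvagin's conjecture with the local--global machinery of the classes $\kappa_{c,m}$, concluding with a Cassels--Tate parity argument. By Proposition~\ref{prop:inequal} the integer $f$ is well-defined under Conjecture~\ref{conj:kolconj}. Choose $c_f \in \Lambda^f$ realizing $m(c_f) = m_f$; then for every $m > m_f$ the class $\kappa_{c_f, m}$ is a nontrivial element of $\Sel_{p^m}(E/K)$, the Selmer condition being the standard local check (cf.~\cite[\S 4]{mccallum:kolyvagin}), and by the eigenspace remark of Section~\ref{subsec:kolclasses} it lies in the $\eps(-1)^f$-eigenspace for complex conjugation.

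The key input is Kolyvagin's local reciprocity law, which at a Kolyvagin prime $\ell \mid c$ identifies the singular part of $\mathrm{loc}_\ell(\kappa_{c,m})$ with a nonzero multiple of the reduction of $\kappa_{c/\ell, m}$, while $\kappa_{c,m}$ is unramified at every prime not dividing $c$. I would use this to extract $f+1$ independent classes in $\Sel_{p^\infty}(E/K)^{\eps(-1)^{f+1}}$: for each of $f+1$ suitably chosen Kolyvagin primes $\ell_0, \ldots, \ell_f$ coprime to $c_f$, the class $\kappa_{c_f \ell_i, m}$ lies in the opposite eigenspace, and its ramified localization pattern, packaged in the ``matrix of localizations'' $(\mathrm{loc}_{\ell_i}(\kappa_{c_f \ell_j, m}))_{i,j}$, is shown to be nondegenerate modulo $p$. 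The needed primes $\ell_i$ are produced by Chebotarev density, choosing their Frobenius classes in $\Gal(\Q(E[p^m])/\Q)$ generically; this is where surjectivity of $\rhobar_{E,p}$ is essential. For the matching upper bound, global Poitou--Tate duality supplies $\sum_v \langle \mathrm{loc}_v(s), \mathrm{loc}_v(\kappa_{c,m}) \rangle = 0$ for any Selmer class $s$; using that $m_r = \infty$ for $r < f$ forces $P_c$ to be infinitely $p$-divisible in $E(K[c])$ for $c \in \Lambda^r$ with $r < f$, so the corresponding Kolyvagin classes already lie in the divisible part of the Selmer group and impose no further constraint. The surviving relations pin $\corank \Sel_{p^\infty}(E/K)^{\eps(-1)^{f+1}}$ at exactly $f+1$ and force the corank $r$ of the opposite eigenspace to satisfy $r \leq f$.

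Finally, the parity $f - r \equiv 0 \pmod 2$ follows from the Cassels--Tate pairing on $\Sha(E/K)[p^\infty]$: being alternating and nondegenerate on the maximal divisible quotient, it forces the $p$-corank of each eigenspace of the divisible part of $\Sha(E/K)$ to be even; combining this with the parity of the Mordell--Weil coranks across the two eigenspaces, which is controlled by the $-1$ root number of $L(E/K,s)$ under the Heegner hypothesis, yields the stated congruence. The principal obstacle, and the technical heart of the argument, is the nondegeneracy of the Kolyvagin matrix of localizations modulo $p$: without it the classes $\kappa_{c_f \ell_i, m}$ span only a proper subspace of the target eigenspace and both the exact corank $f+1$ and the parity of $f - r$ fail. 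Producing Kolyvagin primes with the required independent Frobenius action on $E[p^m]$ is the delicate Chebotarev step where all the running hypotheses interact.
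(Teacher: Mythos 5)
The paper itself does not prove this statement; Theorem~\ref{thm:kolselstruct} is attributed to Kolyvagin and imported from~\cite{kolyvagin:structure_of_selmer}, so there is no in-text argument to compare against. Your sketch does reproduce the broad shape of Kolyvagin's Euler-system proof: the reciprocity law relating $\mathrm{loc}_\ell(\kappa_{c\ell,m})$ to $\kappa_{c,m}$, Chebotarev to manufacture Kolyvagin primes with prescribed Frobenius (where surjectivity of $\rhobar_{E,p}$ enters), global Poitou--Tate to play localizations of derived classes off against Selmer classes, and non-degeneracy of the localization matrix as the crux. You also correctly place the classes $\kappa_{c_f\ell,m}$ in the $\eps(-1)^{f+1}$-eigenspace, since $c_f\ell$ has $f+1$ prime factors.

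The parity step is where the sketch breaks down. The Cassels--Tate pairing is alternating and non-degenerate on $\Sha(E/K)[p^\infty]$ \emph{modulo its maximal divisible subgroup}; the divisible part lies in its kernel. It therefore constrains only the finite piece of $\Sha$ to have square order and says nothing about the $p$-coranks, which are exactly what appear in the theorem (corank of $\Sel_{p^\infty}$ is Mordell--Weil rank plus corank of the divisible part of $\Sha$). Appealing to the root number $-1$ of $L(E/K,s)$ to fix the total corank mod $2$ is, on its own, the $p$-parity conjecture --- a separate and deep input that Kolyvagin never invokes; in his argument the congruence $f-r\equiv 0\ (2)$ falls out internally from the Euler-system bookkeeping, as the derived classes alternate eigenspaces with $f_c$ and the vanishing pattern $m_r=\infty$ for $r<f$, fed through the localization relations, forces the corank defect in the opposite eigenspace to be even. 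You would need to extract the parity from that structure rather than from an external duality, or else openly invoke $p$-parity as an extra hypothesis and flag it as a different route. A secondary slip: $m(c)=\infty$ for $c\in\Lambda^r$, $r<f$, does not mean $P_c$ is infinitely $p$-divisible; it means only that $\ord_p$ of the divisibility index exceeds $M(c)$, which suffices to kill the admissible classes $\kappa_{c,m}$ with $m\le M(c)$ but is strictly weaker than what you wrote.
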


The above structure theorem of Kolyvagin has the following consequence which strongly supports Conjecture~\ref{conj:an-corank}. 

\begin{corollary}\label{cor:large-selmer}
Assume Conjecture~\ref{conj:kolconj}. Then 
\noindent (i) If $r_{\an}(E/\Q)$ is even and nonzero then  
$$
r_{p}(E/\Q) \geq 2.
$$  

\noindent (ii) If $r_\an(E/\Q)$ is odd and strictly larger than $1$ then 
$$
r_p(E/\Q) \geq 3.
$$ 
\end{corollary}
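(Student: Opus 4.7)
The plan is to deduce the corollary as a direct consequence of Theorem~\ref{thm:kolselstruct}, using the Heegner hypothesis, the Gross--Zagier formula, and the decomposition of $\Sel_{p^\infty}(E/K)$ under complex conjugation.

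First, I would identify the sign $\eps$: by definition, $-\eps$ is the sign of the functional equation of $L(E/\Q,s)$, so $\eps=-1$ in case~(i) and $\eps=+1$ in case~(ii). The Heegner hypothesis forces the sign of the functional equation of $L(E/K,s)=L(E/\Q,s)\,L(E^K/\Q,s)$ to be $-1$, so $r_\an(E/K)$ is odd and $r_\an(E^K/\Q)$ has opposite parity to $r_\an(E/\Q)$; this gives $r_\an(E/K)\geq 3$ in either case. The Gross--Zagier formula then forces $y_K$ to be torsion, so $\kappa_{1,m}=0$ for every $m$, $m_0=\infty$, and the index $f$ of Theorem~\ref{thm:kolselstruct} satisfies $f\geq 1$.

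Next, since $p$ is odd, complex conjugation decomposes $\Sel_{p^\infty}(E/K)\simeq \Sel_{p^\infty}(E/K)^+\oplus\Sel_{p^\infty}(E/K)^-$ with $\Sel_{p^\infty}(E/K)^+\simeq\Sel_{p^\infty}(E/\Q)$ and $\Sel_{p^\infty}(E/K)^-\simeq\Sel_{p^\infty}(E^K/\Q)$ (up to finite groups). Theorem~\ref{thm:kolselstruct} asserts that the $\eps(-1)^{f+1}$-eigenspace has $\Z_p$-corank exactly $f+1\geq 2$, while the companion eigenspace has corank $r\leq f$ with $f-r$ even. I would then perform a case analysis on the parity of $f$: in the \emph{favorable} subcase the corank-$(f+1)$ eigenspace is $\Sel_{p^\infty}(E/\Q)$, immediately giving $r_p(E/\Q)=f+1\geq 2$ in case~(i); in case~(ii), the $p$-parity congruence $r_p(E/\Q)\equiv r_\an(E/\Q)\pmod 2$ forces $f+1$ to be odd (so $f\geq 2$), yielding $r_p(E/\Q)\geq 3$.

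In the complementary \emph{unfavorable} subcase, the corank-$(f+1)$ eigenspace is $\Sel_{p^\infty}(E^K/\Q)$, so $r_p(E^K/\Q)\geq f+1\geq 2$; applying the contrapositive of the Gross--Zagier/Kolyvagin rank-one theorem to $E^K/\Q$ then forces $r_\an(E^K/\Q)\geq 2$, and combining with the parity of $r_\an(E^K/\Q)$ and with the companion-eigenspace bound $r\leq f$ (with $f-r$ even) yields the required bound on $r_p(E/\Q)$. The principal obstacle is exactly this sign bookkeeping in the unfavorable subcase: Theorem~\ref{thm:kolselstruct} by itself places the large corank on the $E^K$ side, and recovering the bound on $r_p(E/\Q)$ requires combining it with the rank-one theorem and a $p$-parity argument to transfer the information back to the $E/\Q$ side.
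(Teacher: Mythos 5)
Your plan diverges from the paper's proof at a structurally important point, and the divergence opens a genuine gap.

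The paper does \emph{not} work with an arbitrary Heegner discriminant. It \emph{chooses} $D$: in case~(i), by Bump--Friedberg--Hoffstein or Murty--Murty one picks $D$ so that $L'(E^D/\Q,1)\ne 0$, hence $r_{\an}(E^D/\Q)=1$ and, by the Gross--Zagier/Kolyvagin rank-one theorem, $r_p^-(E/K)=r_p(E^D/\Q)=1$; in case~(ii), by Waldspurger one picks $D$ so that $L(E^D/\Q,1)\ne 0$, giving $r_p^-(E/K)=0$. This pins down the corank of the twist eigenspace to $0$ or $1$ \emph{a priori}. Feeding that into Theorem~\ref{thm:kolselstruct} then forces, by a short contradiction argument, $f=0$ and hence $y_K$ nontorsion, which contradicts $r_{\an}(E/K)\ge 3$ via Gross--Zagier. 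You instead fix a generic Heegner $K$ and try to read the conclusion off the structure theorem alone. That cannot work: in the subcase where Theorem~\ref{thm:kolselstruct} puts the corank-$(f+1)$ eigenspace on the $E^K$ side, the only information it gives about $r_p(E/\Q)=r$ is the \emph{upper} bound $r\le f$ together with the parity constraint $f\equiv r\pmod 2$. Your attempt to salvage this — invoking the contrapositive of the rank-one theorem to get $r_{\an}(E^K/\Q)\ge 2$, and appealing to $p$-parity — yields no lower bound on $r$: knowing $r_{\an}(E^K/\Q)$ is large tells you nothing about $r_p(E/\Q)$, and the parity congruence is again only a constraint mod $2$, which is consistent with $r=0$. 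Indeed, nothing in your framework excludes the scenario $r_p(E/\Q)=0$ while $r_p(E^K/\Q)$ is large, precisely the scenario the paper's careful choice of $D$ is designed to rule out.

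Two further concerns. First, you invoke the $p$-parity congruence $r_p(E/\Q)\equiv r_{\an}(E/\Q)\pmod 2$ as if it were free; this is a deep theorem not used or cited in the paper, and in any case, as noted above, it does not close the gap. Second, your sign bookkeeping is internally shaky: the paper's $\eps$ is defined (as the usage in Proposition~\ref{prop:nontriv} makes clear) via the functional equation of $E_{/K}$, which is always $-1$ under the Heegner hypothesis, so $\eps=+1$ identically; your identification $\eps=-1$ in case~(i) and $\eps=+1$ in case~(ii) uses the functional equation over $\Q$ instead, and this mismatch propagates into which eigenspace you call ``favorable.'' To repair the proof you should replace the generic choice of $K$ with the analytic non-vanishing results that the paper cites, which is the real engine of the argument.
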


\begin{proof}
(i) By using~\cite{bump-friedberg-hoffstein} or~\cite{murty-murty} one can choose a quadratic imaginary field $K = \Q(\sqrt{-D})$, such 
that the derivative $L'(E^D_{/\Q}, s)$ of the $L$-function $L(E^D_{/\Q}, s)$ of the twist $E^D$ of $E$ by the 
quadratic character associated to $K$ does not vanish at $s = 1$. This means (by Gross-Zagier's 
formula~\cite{gross-zagier}) that the basic Heegner point $y_K$ has infinite order and thus, by Kolyvagin's work, 
the Selmer group $\Sel_{p^\infty}(E^D/\Q)$ has corank one, i.e., $r_p^{-}(E/K) = 1$. We want to show that 
$r_p(E/K) \geq 3$, i.e., $r_p^{+}(E/K)  = r_p(E/\Q)\geq 2$. Assume the contrary, i.e. $r_p^{+}(E/K) \leq 1$. 
Then, according to Theorem~\ref{thm:kolselstruct}, $r = 0$. Since $f$ has the same parity as $r$, we conclude 
that $f = 0$ as well, i.e., the Heegner point $y_K$ has infinite order in 
$E(K)$ and hence (by the Gross-Zagier formula) the $L$-function vanishes 
to order 1 which is a 
contradiction, since by hypothesis $r_{\an}(E/\Q) > 0$.
Therefore $r_p(E/\Q) = r_p^{+}(E/K) \geq 2$. \\

\noindent (ii) It follows from the work of Waldspurger (see also~\cite[pp.543-44]{bump-friedberg-hoffstein}) that 
one can choose a quadratic imaginary field $K = \Q(\sqrt{-D})$, such that the $L$-function of the twist $E^D$ 
satisfies $L(E^D, 1) \ne 0$. This means that $r_p(E^D/\Q) = 0$, i.e., $r_p^+(E/K) = 0$. Thus, by Theorem~\ref{thm:kolselstruct} we obtain 
$r = 0$ and $f$ is even ($r$ and $f$ are as in Theorem~\ref{thm:kolselstruct}). If $f > 0$ we are done because in that case $r_p(E/K) \geq 3$. 
If $f = 0$, we use the same argument as in (i) to arrive at a contradiction. Therefore, $$r_p(E/\Q) = r_p^{+}(E/K) \geq 3.$$
\end{proof}

\subsection{Cornut's theoretical evidence for Kolyvagin's conjecture}\label{subsec:cornut}
The following evidence for Conjecture~\ref{conj:kolconj} was proven by Christophe Cornut. 

\begin{proposition}\label{prop:evidence}
For all but finitely many $c \in \Lambda$ there exists a choice 
$R$ of liftings for the elements of 
$\Gal(K[1]/K)$ into $\Gal(K^{\ab}/K)$, such that if $P_c = D_0 D_c y_c$ is the Heegner point defined in 
terms of this choice of liftings (i.e, if $\ds D_0 = \sum_{\sigma \in R} \sigma$), then 
$P_c$ is non-torsion.
\end{proposition}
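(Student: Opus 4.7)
The plan is to follow the arguments of Cornut (see~\cite{cornut:compte-rendus}) based on equidistribution of CM points.

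First, I would reformulate the problem representation-theoretically. The short exact sequence $1 \to G_c \to \cG_c \to \Gal(K[1]/K) \to 1$ admits no canonical splitting, but any choice of lifting set $R$ provides a set-theoretic section, and $D_0 = \sum_{\sigma \in R} \sigma$ depends on that section. Viewing $E(K[c]) \otimes \Q$ as a $\Q[\cG_c]$-module, the question becomes whether some section-sum $D_0$ applied to $D_c y_c$ has non-zero image in some irreducible summand. I would reduce to showing that $D_c y_c \in E(K[c]) \otimes \Q$ is non-zero and that the $\cG_c$-submodule it generates is not annihilated by every possible $D_0$.

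Second, I would invoke the equidistribution theorem underlying Cornut's approach: as $c$ ranges through $\Lambda$ with $c \to \infty$, the Galois orbit $\cG_c \cdot x_c$ equidistributes on the appropriate disjoint union of connected components of $X_0(N)(\C)$ with respect to the hyperbolic measure. This is established via Ratner's theorem on unipotent flows applied adelically to the $K^\times$-action. Since $\varphi : X_0(N) \to E$ is non-constant, the preimage $\varphi^{-1}(E(\Qbar)_{\tors})$ is a finite set of measure zero, so equidistribution forces the Galois orbit of $y_c$ in $E(K[c])$ to contain a positive proportion of non-torsion points once $c$ is large enough.

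Third, I would extract a workable lifting $R$ by a pigeonhole argument. The set of sections $R$ is finite, and if every such $R$ produced a torsion $P_c$, then the conjugates $\sigma y_c$ would satisfy strong $\Q$-linear relations in $E(K[c]) \otimes \Q$. These relations would force the Heegner submodule to lie in a proper Galois-invariant subspace, contradicting the equidistribution established above. The derivative operator $D_c$ acts invertibly, up to bounded denominators, on the relevant isotypic components of $E(K[c]) \otimes \Q$ indexed by characters of $G_c$, so the non-triviality supplied by equidistribution propagates through the full operator $D_0 D_c$.

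The main obstacle is the third step: making precise the implication from equidistribution of CM points on $X_0(N)(\C)$ to linear independence of Heegner points on $E$ in a way that survives the combined action of $D_c$ and an arbitrary section-sum $D_0$. This requires tracking how the Kolyvagin congruences $a_\ell \equiv \ell + 1 \equiv 0 \pmod{p}$ interact with the $\cG_c$-module decomposition of $E(K[c]) \otimes \Q$ and ruling out the accidental cancellations that could, a priori, force $P_c$ into the torsion subgroup for every choice of $R$. It is here that Cornut's refined analysis of the arithmetic of CM orbits, rather than just their topological equidistribution, is essential, together with the surjectivity of $\rhobar_{E,p}$ which prevents $E$ from acquiring extra CM-induced endomorphisms over $K^{\ab}$.
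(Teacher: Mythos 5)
Your proposal reaches for far heavier machinery than the paper uses, and you yourself flag the critical gap in your third step, so let me point out both issues. The actual proof avoids equidistribution and Ratner's theorem entirely; it is an elementary counting argument. The paper first proves that $E(K[\infty])_{\tors}$ is finite, where $K[\infty] = \bigcup_{c\in\Lambda} K[c]$: pick a good-reduction prime $q$, inert in $K$ and outside $\Lambda^1$; then the prime of $K$ above $q$ splits completely in every $K[c]$, so $E(K[\infty])_{\tors}$ embeds in $E(K_\lambda)_{\tors}$, which is finite. Set $M = |E(K[\infty])_{\tors}|$ and let $m_E$ be the modular degree. For any $c$ with $d(c) = \prod_{\ell\mid c}(\ell+1) > m_E M$, the $\Gal(K[c]/K[1])$-orbit of $x_c$ has $d(c)$ elements, hence the orbit of $y_c$ has at least $d(c)/m_E > M$ elements. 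Fixing a section $R$ containing $1$ and swapping $1$ for $\tau\in\Gal(K[c]/K[1])$ gives differences $S_\tau - S = \tau y_c - y_c$ taking more than $M$ distinct values, so not all the $S_\tau$ can be torsion; some $D_0 y_c$ is non-torsion. No measure-theoretic input is needed, and your appeal to ``equidistribution forces the Galois orbit to contain non-torsion points'' is exactly what this lemma proves unconditionally by a pigeonhole on the bounded torsion.

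Your third step is where you concede uncertainty, and it is also where your framing gets in your own way. Once $D_0 y_c$ is known to be non-torsion, the paper decomposes $E(K[c])\otimes\C$ into $\chi$-isotypic pieces, picks $\chi$ with $e_\chi D_0(y_c\otimes 1)\ne 0$, and uses $\tau e_\chi = \chi(\tau)e_\chi$ to reduce $e_\chi D_0 D_c$ to multiplication by the explicit scalar $\prod_{\ell\mid c}\sum_{i=1}^{\ell} i\,\chi(\sigma_\ell)^i$. That scalar is $\frac{\ell+1}{\chi(\sigma_\ell)-1}$ or $\frac{\ell(\ell+1)}{2}$, never zero, so $P_c = D_0 D_c y_c$ is non-torsion. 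Your heuristic that ``$D_c$ acts invertibly up to bounded denominators'' is the right intuition, but stating it is not proving it; the explicit character computation is the content, and it requires none of the Kolyvagin congruences on $a_\ell$ that you suggest tracking, nor the surjectivity of $\rhobar_{E,p}$, which plays no role in this proposition. In short: the equidistribution route is a detour, and the part you were unsure how to finish is precisely the short explicit calculation that closes the argument.
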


\begin{remark}
For a nontorsion point $P_c$, let $e_c$ denotes the minimal exponent $e$, such that 
$P_c \notin p^{e_c}E(K[c])$. 
Proposition~\ref{prop:evidence} gives  very  little evidence towards the Kolyvagin conjecture. 
The reason is that even if one gets non-torsion points $P_c$, it might 
still happen that for each such $c$ we have $e_c > M(c)$ in which case all classes  
$\kappa_{c, m}$ with $m \leq M(c)$ will be trivial.  
\end{remark}

Let $\ds K[\infty] = \bigcup_{c \in \Lambda} K[c]$.   
\begin{lemma}\label{lem:fintors}
The group $E(K[\infty])_{\tors}$ is finite. 
\end{lemma}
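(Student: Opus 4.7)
The strategy is to argue one prime $\ell$ at a time. The key structural observation is that $L := K[\infty]$ is abelian over $K$: each ring class field $K[c]/K$ is abelian by global class field theory, hence so is their compositum. Therefore, for every prime $\ell$, the action of $\Gal(\Qbar/K)$ on $E[\ell^\infty](L) \subseteq E[\ell^\infty]$ factors through the maximal abelian quotient of its image in $\Aut(T_\ell E) = \GL_2(\Z_\ell)$.

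Next, the surjectivity assumption on $\rhobar_{E,p}$ rules out complex multiplication for $E$, since for a CM elliptic curve the image of $\rhobar_{E,p}$ always lies in the normalizer of a (split or nonsplit) Cartan subgroup of $\GL_2(\F_p)$, which is a proper subgroup. Serre's open image theorem then guarantees that, for every prime $\ell$, the image $G_\ell$ of $\Gal(\Qbar/K)$ in $\GL_2(\Z_\ell)$ is open, and that $G_\ell \supseteq \SL_2(\Z_\ell)$ for all but finitely many $\ell$.

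Combining the two observations, the closure $[G_\ell, G_\ell]$ of the commutator subgroup of $G_\ell$ acts trivially on $E[\ell^\infty](L)$. For $\ell \geq 5$ with $G_\ell \supseteq \SL_2(\Z_\ell)$, perfectness of $\SL_2(\Z_\ell)$ gives $[G_\ell, G_\ell] \supseteq \SL_2(\Z_\ell)$; since $\SL_2(\Z_\ell)$ fixes no nonzero element of $V_\ell := T_\ell E \otimes \Q_\ell$, one concludes $E[\ell^\infty](L) = 0$. For each of the remaining finitely many primes $\ell$, the group $[G_\ell, G_\ell]$ is still open in $\SL_2(\Z_\ell)$, so it contains a principal congruence subgroup $\Gamma(\ell^n)$, whose $\ell$-power torsion fixed points are precisely $E[\ell^n]$, which is finite. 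Hence $E[\ell^\infty](L)$ is finite for every $\ell$ and zero for all but finitely many $\ell$, so $E(L)_{\tors} = \bigoplus_\ell E[\ell^\infty](L)$ is finite. The only steps that are not purely formal are the verifications that $\SL_2(\Z_\ell)$ acts without nonzero invariants on $V_\ell$ and that an open subgroup has only finitely many invariants on $E[\ell^\infty]$; both are standard, the latter via the explicit description of the invariants of $\Gamma(\ell^n)$.
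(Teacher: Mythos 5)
Your argument is correct in outline but takes a genuinely different route from the paper. The paper's proof is local and quite elementary: choose a rational prime $q$ of good reduction for $E$ that is inert in $K$ and does not lie in $\Lambda^1$, so that the prime $\mathfrak q$ of $K$ above $q$ is principal, generated by an integer coprime to every level $c \in \Lambda$. Class field theory then shows $\mathfrak q$ splits completely in every $K[c]$, hence in $K[\infty]$, and so the completion of $K[\infty]$ at any place above $\mathfrak q$ is just $K_{\mathfrak q}$. This gives an injection $E(K[\infty])_{\tors} \hookrightarrow E(K_{\mathfrak q})_{\tors}$, and the latter is finite because $E(K_{\mathfrak q})$ is an extension of a finite group by a group isomorphic to $\Z_q^2$. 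This argument avoids the full strength of Serre's open image theorem (and indeed works even without assuming $E$ is non-CM). Your global approach -- exploiting that $K[\infty]/K$ is abelian, so $E[\ell^\infty](K[\infty])$ is fixed by the closed commutator of the $\ell$-adic image, then feeding in openness and perfectness of $\SL_2(\Z_\ell)$ -- is sound and arguably more illuminating about \emph{why} the torsion is finite, but it is heavier machinery than the problem needs.

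One small precision issue worth fixing: you write that since $\SL_2(\Z_\ell)$ fixes no nonzero element of $V_\ell = T_\ell E \otimes \Q_\ell$, one concludes $E[\ell^\infty](L) = 0$. That implication, as stated, is false: $-I$ has no nonzero fixed vector in $V_2$ yet fixes all of $E[2]$ inside $E[2^\infty]$, so vanishing of $V_\ell$-invariants does not by itself kill invariants in the divisible module $E[\ell^\infty]$. The correct (and equally standard) observation is that $-I \in \SL_2(\Z_\ell)$ fixes only the $2$-torsion of $E[\ell^\infty]$, which is trivial for $\ell$ odd; alternatively, $\SL_2(\Z/\ell^n\Z)$ acts transitively on elements of $(\Z/\ell^n\Z)^2$ of any fixed order, so it has no nonzero fixed vector there. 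With that replacement the argument is complete.
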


\begin{proof}
Let $q$ be any prime which is a prime of good reduction for $E$, which is inert in $K$ and which is different from the primes in 
$\Lambda^1$. Let $\mathfrak q$ be the unique prime of $K$ over $q$. It follows from class field theory that the prime $\mathfrak q$ splits 
completely in $K[\infty]$ since it splits in each of the finite extensions $K[c]$. Thus, the completion of $K[\infty]$ at any prime which 
lies over $\ell$ is isomorphic to $K_{\mathfrak q}$ and therefore, $E(K[\infty])_{\tors} \hra E(K_\lambda)_{\tors}$. The last group is finite 
since it is isomorphic to an extension of $\Z_q^2$ by a finite group (see~\cite[Lem.I.3.3]{milne:duality} or~\cite[p.168-169]{tate:p-div}). 
Therefore, $E(K[\infty]_{\tors})$ is finite. 

\comment{
Let $\ell \notin \Lambda^1$ be an arbitrary prime which is inert in $K$ (take any inert prime $\ell$, such that 
$p \nmid \ell+1$) and let $\lambda \mid \ell$ be the unique prime of $K$ above $\ell$. It follows from class field theory that the prime  $\ell$ splits completely in $K[\infty]$ since it splits completely in each of the 
finite extensions $K[n]$.\edit{I don't understand this at all.  Maybe this would
be right if $\ell$ were in $\Lambda^1$...}
Thus, the completion of $K[\infty]$ at any prime which lies over $\ell$ is 
isomorphic to 
$K_\lambda$ and therefore, 
$E(K[\infty])_{\tors} \hra E(K_\lambda)_{\tors}$ which is finite.  
\edit{We should give a reference that $E(K_\lambda)_{\tors}$ is
finite, e.g., using formal groups.}
}
\end{proof}

Let $|E(K[\infty])_{\tors}| = M < \infty$ and let $\ds d(c) = \prod_{\ell \mid c} (\ell+1)$ for any $c \in \Lambda$. 
Let $m_E$ be the modular degree of $E$, i.e., the degree 
of an optimal modular parametrization $\pi : X_0(N) \ra E$. 

\begin{lemma}\label{lem:D0}
Suppose that $c \in \Lambda$ satisfies $d(c) > m_E M$. There exists a lifting $R$ of $\Gal(K[1] / K)$ in 
$\Gal(K[c]/K)$, such that $D_0 y_c \notin E(K[c])_{\tors}$, where $\ds D_0 = \sum_{\sigma \in R} \sigma$. 
\end{lemma}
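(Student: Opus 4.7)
The plan is to argue by contradiction: assume that $D_0(R) y_c$ is torsion for \emph{every} choice of liftings $R$, and then derive $d(c) \leq m_E M$. The key move will be to vary only a single component of $R$ at a time.

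First, I would fix a lifting $R = \{\tau_\sigma : \sigma \in \Gal(K[1]/K)\}$ and a distinguished element $\sigma_0 \in \Gal(K[1]/K)$ with chosen lift $\tau_0 = \tau_{\sigma_0}$. The set of all lifts of $\sigma_0$ in $\cG_c$ is the coset $G_c\tau_0$, of cardinality $|G_c| = \prod_{\ell \mid c}(\ell+1) = d(c)$. If $R'$ differs from $R$ only at $\sigma_0$, with new lift $\tau_{\sigma_0}' \in G_c\tau_0$, then
\[
D_0(R) y_c - D_0(R') y_c \;=\; y_c^{\tau_0} - y_c^{\tau_{\sigma_0}'}.
\]
Under the contradiction hypothesis both $D_0(R) y_c$ and $D_0(R') y_c$ are torsion, so their difference is as well. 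Letting $\tau_{\sigma_0}'$ range over all of $G_c\tau_0$, I conclude that the $d(c)$ conjugates $\{y_c^\tau : \tau \in G_c\tau_0\}$ all lie in a single coset of $E(K[c])_{\tors}$ in $E(K[c])$.

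Now I count. By Lemma~\ref{lem:fintors} we have $E(K[c])_{\tors} \subseteq E(K[\infty])_{\tors}$, so this coset contains at most $M$ elements and the family $\{y_c^\tau\}_{\tau \in G_c\tau_0}$ realizes at most $M$ distinct points of $E$. On the other hand, the theory of complex multiplication (as recalled in Section~\ref{subsec:heegdefs}) shows that $\cG_c$ acts simply transitively on the orbit of $x_c$ in $X_0(N)$, so the $d(c)$ CM points $\{\tau x_c : \tau \in G_c\tau_0\}$ are pairwise distinct. Since the optimal parametrization $\pi : X_0(N) \to E$ has degree $m_E$, each fiber of $\pi$ contains at most $m_E$ geometric points, so the images $y_c^\tau = \pi(\tau x_c)$ realize at least $d(c)/m_E$ distinct values. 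Comparing the two counts yields $d(c)/m_E \leq M$, contradicting the hypothesis $d(c) > m_E M$.

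The only ingredient requiring anything beyond routine bookkeeping is the freeness of the $\cG_c$-action on the CM orbit of $x_c$, which is a direct consequence of the main theorem of CM already used in Section~\ref{subsec:heegptscomp} to enumerate the Galois conjugates of $x_c$ as $[\C/\mathfrak a_i^{-1} \to \C/\mathfrak a_i^{-1}\cN_c^{-1}]$ indexed by $\Pic(\cO_c)$; I do not anticipate any serious obstacle here.
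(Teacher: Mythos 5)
Your proof is correct and is essentially the same argument as the paper's: both fix a set of coset representatives, vary only the lift of a single distinguished element, observe that this changes $D_0 y_c$ by a difference $\tau y_c - \tau_0 y_c$, and then count using the freeness of the $\cG_c$-action on the CM orbit, the degree bound $m_E$, and the bound $M$ on torsion. The only cosmetic differences are that the paper runs the count directly (showing some $S_\tau$ is non-torsion) rather than by contradiction, and it specializes $\sigma_0$ to the identity.
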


\begin{proof}
The $\Gal(K[c]/K[1])$-orbit of the point $x_c \in X_0(N)(K[c])$ consists of $d(c)$ distinct points, so there are 
at least $d(c) / m_E$ elements in the orbit $\Gal(K[c]/K[1])y_c$. Choose a set of representatives $R$ 
of $\Gal(K[c]/K) / \Gal(K[c]/K[1])$ which contains the identity element 
$1 \in \Gal(K[c]/K)$. For $\tau \in \Gal(K[c]/K[1])$
define 
$$
R_\tau = (R -\{\sigma_0\}) \cup \{\tau\}.  
$$  
Let $\ds S = \sum_{\sigma \in R} \sigma y_c$ and $\ds S_\tau = \sum_{\sigma \in R_\tau} \sigma y_c$. Then 
$$
S_\tau - S = \sigma y_c - y_c,   
$$
which takes at least $d(c) / m_E > M$ distinct values. Therefore, there exists an 
automorphism 
$\tau \in \Gal(K[c]/K[1])$, for which 
$S_\tau \notin E(K[c])_{\tors}$, which  proves the lemma.   
\end{proof}

\begin{proof}[Proof of Proposition~\ref{prop:evidence}]
Suppose that $c \in \Lambda$ satisfies the statement of Lemma~\ref{lem:D0}, i.e., 
$D_0 y_{c} \notin E(K[c])_{\tors}$. For any ring class character $\chi : \Gal(K[c]/K) \ra \C^\times$ 
let 
$e_{\chi} \in \C[\Gal(K[c]/K)]$ be the eidempotent projector corresponding to $\chi$. Explicitly, 
$$
e_{\chi} = \frac{1}{\# \Gal(K[c]/K}) \sum_{\sigma \in \Gal(K[c]/K)} \chi^{-1}(\sigma) \sigma \in \C[\Gal(K[c]/K)]. 
$$
Consider $V = E(K[c]) \otimes \C$ as a complex representation of $\Gal(K[c]/K)$. Then the vector 
$D_0 y_c \otimes 1 \in V$ is nontrivial and since 
$$
V = \bigoplus_{\chi : \Gal(K[c]/K) \ra \C^\times} V_\chi, 
$$
then there exists a ring class character $\chi$, such that $e_\chi D_0(y_c \otimes 1) \ne 0$ 
(here, $V_\chi$ is the eigenspace corresponding to the character $\chi$). Next, we consider 
the point $D_0 D_c y_c \in E(K[c])$. 

Finally, we claim that $D_0 D_c y_c \otimes 1 \in E(K[c]) \otimes \C$ is 
nonzero, which is sufficient to conclude that $P_c = D_0 D_c y_c \notin E(K[c])_{\tors}$. We prove 
that $e_\chi (D_0 D_c y_c \otimes 1) \ne 0$. Indeed, 
\begin{eqnarray*}
e_\chi D_0 D_c (y_c \otimes 1) &=& e_{\chi} D_c D_0 (y_c \otimes 1) = \prod_{\ell \mid c} 
\left ( \sum_{i = 1}^\ell i \sigma_\ell^i \right ) e_{\chi} D_0(y_c \otimes 1) = \\ 
&=& \prod_{\ell \mid c} \left ( \sum_{i = 1}^\ell i \chi ( \sigma_\ell )^i \right ) e_{\chi} D_0(y_c \otimes 1),       
\end{eqnarray*}
the last equality holding since $\tau e_{\chi} = \chi(\tau) e_{\chi}$ in 
$\C[\Gal(K[c]/K)]$ for all 
$\tau \in \Gal(K[c]/K)$. Thus, it remains to compute $\ds \sum_{i=1}^\ell i \chi(\sigma_{\ell})^i$ for 
every $\ell \mid c$. It is not hard to show that 
$$
\sum_{i = 1}^\ell i \chi(\sigma_{\ell})^i = \left \{
\begin{array}{ll}
\frac{\ell + 1}{\chi(\sigma_\ell) - 1} & \textrm{if } \chi(\sigma_\ell) \ne 1 \\
\frac{\ell(\ell+1)}{2} & \textrm{if } \chi(\sigma_\ell) = 1. 
\end{array}
\right.  
$$
Thus, $e_\chi D_0 D_c (y_c \otimes 1) \ne 0$ which means that $P_c = D_0 D_c y_c \notin E(K[c])_{\tors}$ 
for any $c$ satisfying $D_0 y_c \notin E(K[c])_{\tors}$. To complete the proof, notice that for all, but finitely 
many $c \in \Lambda$, the hypothesis of Lemma~\ref{lem:D0} will be satisfied. 
\end{proof}

\subsection{Computational evidence for Kolyvagin's conjecture}\label{subsec:389A1}
Consider the example $E = \textrm{\bf 389A1}$ with equation 
{\small $y^2 + y = x^3 + x^2 - 2x$.}
As in Section~\ref{subsec:ptsexamples}, let $D = 7$, $\ell = 5$, and $p = 3$. Using the algorithm of~\cite[\S 2.1]{bsdalg1} we verify 
that the mod $p$ Galois representation $\rhobar_{E, p}$ is surjective. Next, we observe that $\ell = 5$ is a Kolyvagin prime for $E, p$ and $D$. 
Let $c = 5$ and consider the class $\kappa_{5, 1} \in \H^1(K, E[3])$. We claim that $\kappa_{5, 1} \ne 0$ which will verify Kolyvagin's conjecture. 

\begin{proposition}\label{prop:389-verify}
The class $\kappa_{5, 1} \ne 0$. In other words, Kolyvagin's conjecture holds for 
$E = \textrm{\bf 389A1}$,  
$D = 7$ and $p = 3$.  
\end{proposition}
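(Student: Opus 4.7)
The plan is to exploit the criterion from Section~\ref{subsec:kolclasses}: the class $\kappa_{5,1} \in \HH^1(K, E[3])$ is nonzero if and only if the derived Heegner point $P_5 = D_0 D_5 y_5 \in E(K[5])$ fails to be divisible by $3$. Thus the proposition reduces to producing a concrete obstruction to $3$-divisibility of $P_5$ inside the Mordell-Weil group over the ring class field.

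First I would use the explicit description of $y_5 = (\alpha,\beta) \in E(K[5])$ given in Section~\ref{subsec:ptsexamples} for $E = \mathbf{389A1}$, $D=7$, $c=5$. The Galois group $\Gal(K[5]/K)$ has order $12$, with the subgroup $G_5 = \Gal(K[5]/K[1]) \simeq \F_{25}^\times/\F_5^\times$ of order $6$. I would fix a generator $\sigma_5$ of $G_5$ (its action on $\alpha$ is determined by the factorization of $F(x)$ over $K[5]$), pick a set $R$ of coset representatives for $G_5 \subset \mathcal{G}_5$, and then compute the derived point
\[
P_5 \;=\; \sum_{s \in R}\, s\!\left(\sum_{i=1}^{5} i\,\sigma_5^{i}(y_5)\right) \;\in\; E(K[5])
\]
as an explicit pair of algebraic numbers, using the coordinates of $y_5$ and the Galois action computed from the factorization of $F$.

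Second, to detect non-divisibility by $3$, I would look for a rational prime $q$ that is inert (or has a suitable factorization type) in $K[5]$, that is a prime of good reduction for $E$, and that does not divide $3$ or the torsion order of $E(K[5])_{\tors}$. For such a prime and a place $\mathfrak{q}$ of $K[5]$ above $q$, the reduction map $E(K[5]) \to \widetilde{E}(\F_\mathfrak{q})$ sends $3 E(K[5])$ into $3\widetilde{E}(\F_\mathfrak{q})$. Hence if the image $\overline{P_5} \in \widetilde{E}(\F_\mathfrak{q})$ is not in $3\widetilde{E}(\F_\mathfrak{q})$, then $P_5 \notin 3E(K[5])$ and $\kappa_{5,1} \ne 0$. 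The check at $\mathfrak{q}$ is a finite-group computation: one computes $\#\widetilde{E}(\F_\mathfrak{q})$, writes it as $3^{a}\cdot m$ with $\gcd(3,m)=1$, and evaluates $m\cdot \overline{P_5}$; this point is non-trivial in $\widetilde{E}(\F_\mathfrak{q})/3\widetilde{E}(\F_\mathfrak{q})$ precisely when $3^{a-1}\cdot m \cdot \overline{P_5} \ne 0$.

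The main obstacle is the first step: performing the Galois sum $D_0 D_5 y_5$ in $E(K[5])$ faithfully. The coordinates of $y_5$ involve $\sqrt{-7}$ and an algebraic number $\alpha$ of degree $6$ over $K$, and the addition law on $E$ over $K[5]$ (a degree-$12$ extension of $\Q$) produces expressions with large denominators; one must take care that cancellations in the formal sum $\sum i\sigma_5^i(y_5)$ are captured exactly rather than merely numerically (since the Heegner point coordinates in Section~\ref{subsec:ptsexamples} were recognized from floating-point data and are not provably correct, as flagged in the remark there). In practice I would carry out the sum symbolically in $K[5]$ using the minimal polynomial $F(x)$, then reduce modulo a well-chosen $\mathfrak{q}$ before performing the group-law additions; working in $\widetilde{E}(\F_\mathfrak{q})$ from the outset avoids the coefficient blow-up and isolates the only arithmetic that matters for non-vanishing of $\kappa_{5,1}$.
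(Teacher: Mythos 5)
Your reduction is the same as the paper's: $\kappa_{5,1}\ne 0$ if and only if the derived Heegner point $P_5$ lies outside $3E(K[5])$, and both you and the paper then verify this non-divisibility explicitly. Where you diverge is in the method of verification. The paper works directly over $K[5]$ with the degree-$9$ division polynomial $g(x)=\phi_3(x)-X(P_5)\psi_3(x)^2$ and checks it has no root in $K[5]$; this decides the divisibility question definitively in one shot but requires factoring a degree-$9$ polynomial over a degree-$12$ number field. You instead reduce everything modulo a well-chosen prime $\mathfrak q$ of good reduction in $K[5]$ and test whether $\overline{P_5}\in 3\widetilde E(\F_{\mathfrak q})$. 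Your criterion is sound in the direction you need (a homomorphism sends $3E(K[5])$ into $3\widetilde E(\F_{\mathfrak q})$, so a non-divisible image certifies a non-divisible source), and working in $\F_{\mathfrak q}$ from the start is a genuine computational improvement over the paper — it sidesteps the coefficient blow-up in the Galois sum and in the division polynomial, which you rightly flag as the painful part. The trade-off is that your test is one-directional: if $\overline{P_5}$ happens to land in $3\widetilde E(\F_{\mathfrak q})$ you have learned nothing and must try another $\mathfrak q$, and since the local-global principle for divisibility by small primes is not unconditional, you should present this as a search that certifies non-divisibility when it succeeds rather than as an equivalence.

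One small arithmetic slip: for $D=7$ the field $K=\Q(\sqrt{-7})$ has class number $1$, so $K[1]=K$ and $\mathcal G_5=G_5=\Gal(K[5]/K)$ has order $\ell+1=6$, not $12$ (it is $[K[5]:\Q]$ that equals $12$). This is harmless for your argument because your coset set $R$ degenerates to $\{1\}$, so the formula $P_5=\sum_{s\in R}s\bigl(\sum_{i=1}^5 i\,\sigma_5^i(y_5)\bigr)$ collapses to the single sum $\sum_{i=1}^5 i\,\sigma_5^i(y_5)$, which is exactly the expression the paper computes — but the degree-$6$ polynomial $F(x)$ you quote should already have alerted you that there is no extra index-$2$ layer here.
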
 

Before proving the proposition, we recall some standard facts about division polynomials (see, e.g.,~\cite[Ex.3.7]{silverman:aec}). 
For an elliptic curve given in Weierstrass form over any field of characteristic different from 2 and 3,   
{\small
$
y^2  = x^3 + Ax + B, 
$
}
one defines a sequence of polynomials $\psi_m \in \Z[A, B, x, y]$ inductively as follows\footnote{It is easy to check that these are polynomials.}: 
{\small 
\begin{eqnarray*}
& & \psi_1 = 1, \ \psi_2 = 2y, \\
& & \psi_3 = 3x^4 + 6Ax^2 + 12Bx - A^2, \\ 
& & \psi_4 = 4y(x^6 + 5Ax^4 + 20Bx^3 - 5A^2 x^2 - 4ABx - 8B^2 - A^3), \\
& & \psi_{2m+1} = \psi_{m+2}\psi_m^3 - \psi_{m-1}\psi_{m+1}^3 \ \ \textrm{for } m \geq 2, \\
& & 2y\psi_{2m} = \psi_m(\psi_{m+2}\psi_{m-1}^2  -  \psi_{m-2}\psi_{m+1}^2) \ \ \textrm{for } m \geq 3.  
\end{eqnarray*}
}
Define also polynomials $\phi_m$ and $\omega_m$ by 
{\small
$$
\phi_m = x\psi_m^2 - \psi_{m+1}\psi_{m-1}, \ 4y\omega_m = \psi_{m+2}\psi_{m-1}^2 - \psi_{m-2}\psi_{m+1}^2. 
$$
}
After replacing $y^2$ by $x^3 + Ax+B$, the polynomials $\phi_m$ and $\psi_m^2$ can be viewed as polynomials in $x$ with leading 
terms $x^{m^2}$ and $m^2 x^{m^2 - 1}$, respectively. Finally, multiplication-by-$m$ is given by 
{\small 
$$
mP = \left ( \frac{\phi_m(P)}{\psi_m(P)^2}, \frac{\omega_m(P)}{\psi_m(P)^3} \right ). 
$$ 
}
\begin{proof}[Proof of Proposition~\ref{prop:389-verify}]
We already computed the Heegner point $y_5$ on the model 
{\small 
$
y^2 + y = x^3 + x^2 - 2x
$
}
in Section~\ref{subsec:ptsexamples}. The Weierstrass model for $E$ is   
{\small 
$
y^2 = x^3 - 7/3x + 107/108,  
$ 
}
so $\ds A = -7/3$ and $\ds B = 107/108$. 
We now compute the point $P_5 = \sum_{i = 1}^5 i \sigma^i (y_5) \in E(K[5])$ on the 
Weierstrass model, where $\sigma$ is a generator of $\Gal(K[5]/K)$.
To show that $\kappa_{5, 1} \ne 0$ we need to check that there is no point $Q = (x, y)$, such that $3Q = P_5$. For the verification of this fact, we use the division polynomial 
$\psi_3$ and the polynomial $\phi_3$. Indeed, it follows from the recursive definitions 
that   
{\small
\begin{eqnarray*}
\phi_3(x) &=& x^9 - 12Ax^7 - 168Bx^6 + (30A^2 + 72B)x^5 - 168ABx^4 + \\
 &+& (36A^3 + 144AB - 96B^2)x^3 + 72A^2Bx^2 + \\ 
 &+& (9A^4 - 24A^2B + 96AB^2 + 144B^2)x + 8A^3B + 64B^3. 
\end{eqnarray*}
}
Consider the polynomial $g(x) = \phi_3(x) - X(P_5)\psi_3(x)^2$, 
where $X(P_5)$ is the $x$-coordinate of the point $P_5$ on the Weierstrass 
model. We factor $g(x)$ (which has degree 9) over the number 
field $K[5]$ and check that it is irreducible. In particular, there is no root 
of $g(x)$ in $K[5]$, i.e., there is no $Q \in E(K[5])$, such that $3Q = P_5$. 
Thus, $\kappa_{5,1} \ne 0$.     
\end{proof}

\begin{remark}
Using exactly the same method as above, we verify Kolyvagin's conjecture for the 
other two elliptic curves of rank two from Section~\ref{subsec:ptsexamples}. 
For both $E = \textrm{\bf 709A1}$ and $E = \textrm{\bf 718B1}$ we use $D = 7$, $p = 3$ 
and $\ell = 5$ (which are valid parameters), and verify that $\kappa_{5,1} \ne 0$ in 
the two cases. For completeness, we provide all the data of each computation
in the three examples in the files \verb,389A1.txt,, 
\verb,709A1.txt, and \verb,718A1.txt,.  
\end{remark}

\section{Non-trivial elements of the Shafarevich-Tate group}\label{sec:expl-sha}

Throughout the entire section, let $E_{/\Q}$ be a non-CM elliptic curve, 
$K = \Q(\sqrt{-D})$, where $D$ is a 
Heegner discriminant for $E$ such that the Heegner point $y_K$ has infinite order 
in $E(K)$ (which, by the Gross-Zagier formula and Kolyvagin's result, means that 
$E(K)$ has Mordell-Weil rank one) and let $p$ be a prime, such that 
$p \nmid DN$ and the mod $p$ Galois representation $\rhobar_{E, p}$ is surjective.  

\subsection{Non-triviality of Kolyvagin classes.}
Under the above assumptions, the next proposition provides a criterion which 
guarantees that an explicit class in the Shafarevich-Tate group 
$\Sha(E/K)$ is non-zero. 

\begin{proposition}\label{prop:nontriv}
Let $c \in \Lambda_m$. Assume that the following hypotheses are satisfied: 
\begin{enumerate}
\item{}[Selmer hypothesis]:\label{hyp:sel} The class $\kappa_{c, m}\in \H^1(K,E[p^m])$ 
is an element of the Selmer group $\Sel_{p^m}(E/K)$.

\vspace{0.1in}

\item{}[Non-divisibility]:\label{hyp:nondiv} The derived Heegner point $P_c$ is not divisible by $p^m$ 
in $E(K[c])$, i.e., $P_c \notin p^m E(K[c])$.   

\vspace{0.1in}

\item{}[Parity]:\label{hyp:parity} The number $f_c = \# \{ \ell : \ell \mid c \}$ is odd.  
\end{enumerate}
Then the image $\kappa'_{c, m}\in \H^1(K,E)[p^m]$ 
of $\kappa_{c,m}$ is a non-zero element of  $\Sha(E/K)[p^m]$.
\end{proposition}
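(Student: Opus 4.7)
The plan is to treat the containment $\kappa'_{c,m}\in\Sha(E/K)[p^m]$ and the non-triviality of $\kappa'_{c,m}$ as two separate claims. The containment is essentially formal: by the very definition of the Selmer group, one has the exact sequence
\[
0 \to E(K)/p^m E(K) \to \Sel_{p^m}(E/K) \to \Sha(E/K)[p^m] \to 0,
\]
so the natural image of any Selmer class in $H^1(K,E)[p^m]$ lies inside $\Sha(E/K)[p^m]$. Hypothesis~(\ref{hyp:sel}) therefore yields this containment immediately, and the remaining content is the assertion $\kappa'_{c,m}\neq 0$.

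For non-triviality I would argue by contradiction: suppose $\kappa_{c,m}$ comes from some $Q\in E(K)/p^m E(K)$ via the Kummer map $\delta_K$. The parity hypothesis ($f_c$ odd) gives $\eps(c)=-\eps$, so $\kappa_{c,m}$ lies in the $-\eps$-eigenspace of $H^1(K,E[p^m])$ for complex conjugation, and $\Gal(K/\Q)$-equivariance of $\delta_K$ then forces $Q$ to lie in $(E(K)/p^m E(K))^{-\eps}$. I would then invoke the running hypothesis of Section~\ref{sec:expl-sha} that $E(K)$ has Mordell--Weil rank one, generated up to torsion by $y_K=P_1$. Since $\kappa_{1,m}=\delta_K(y_K)$ lies in the $\eps(1)=\eps$-eigenspace, the point $y_K$ generates the $\eps$-part of $E(K)\otimes\Q$, so $E(K)^{-\eps}$ has rank zero; combined with the fact that surjectivity of $\rhobar_{E,p}$ forces $E(K)[p^\infty]=0$, this gives $(E(K)/p^m E(K))^{-\eps}=0$. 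Hence $Q=0$ and therefore $\kappa_{c,m}=0$ in $H^1(K,E[p^m])$.

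To derive a contradiction I would restrict $\kappa_{c,m}$ to $\Gal(\Kbar/K[c])$: for $\sigma$ in this subgroup, the correction term $(\sigma-1)P_c/p^m$ in~(\ref{eqn:cocycle}) vanishes, so the restriction is literally the Kummer class $\delta_{K[c]}(P_c)\in H^1(K[c],E[p^m])$. Hypothesis~(\ref{hyp:nondiv}) together with injectivity of the Kummer map yields $\delta_{K[c]}(P_c)\neq 0$, so $\kappa_{c,m}\neq 0$, contradicting the conclusion of the previous paragraph. The main obstacle I anticipate is handling the eigenspace decomposition of $E(K)/p^m E(K)$ under complex conjugation together with the vanishing of $(E(K)/p^m E(K))^{-\eps}$: one needs $p$ odd so that $2$ is invertible modulo $p^m$, and one needs to verify carefully that surjectivity of $\rhobar_{E,p}$ rules out $K$-rational $p$-torsion --- this uses that $\Gal(\Qbar/K)$ is a normal index-two subgroup of $\Gal(\Qbar/\Q)$, so its image still contains $\SL_2(\F_p)$ and hence fixes no $\F_p$-line in $E[p]$ for odd $p$. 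The rest of the argument is then the standard interplay between the cocycle formula~(\ref{eqn:cocycle}), the Kummer map, and the parity/eigenspace structure of Kolyvagin classes.
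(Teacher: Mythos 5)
Your proof is correct and takes essentially the same approach as the paper's: both use the exact sequence $0 \to E(K)/p^m E(K) \to \Sel_{p^m}(E/K) \to \Sha(E/K)[p^m] \to 0$, its splitting under complex conjugation, the eigenspace computation $\eps(c) = -1$ vs.\ $\eps(1) = +1$, and the vanishing of $(E(K)/p^m E(K))^{-}$ coming from rank one and absence of $p$-torsion. Your version merely recasts the argument as a proof by contradiction and spells out two facts the paper cites implicitly (that $\kappa_{c,m}\ne 0$ follows from Hypothesis~2 via restriction to $G_{K[c]}$, and that surjectivity of $\rhobar_{E,p}$ forces $E(K)[p]=0$), which is fine but not a different route.
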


\begin{proof}
The first hypothesis implies that the image $\kappa'_{c, m}$ of $\kappa_{c, m}$ in $\H^1(K, E)[p^m]$ is an element of 
the Shafarevich-Tate group $\Sha(E/K)$. The second one implies that $\kappa_{c, m} \ne 0$. To show that $\kappa'_{c, m} \ne 0$ 
we use the exact sequence 
$$
0 \ra E(K)/p^mE(K) \ra \Sel_{p^m}(E/K) \ra \Sha(E/K)[p^m] \ra 0
$$
which splits under the action of complex conjugation as 
$$
0 \ra ( E(K)/p^mE(K) )^{\pm} \ra \Sel_{p^m}(E/K)^{\pm} \ra \Sha(E/K)^{\pm}[p^m] \ra 0. 
$$
According to~\cite[Prop.5.4(2)]{gross:kolyvagin}, the class $\kappa_{c, m}$ lies in the $\eps_c$-eigenspace of the Selmer group $\Sel_{p^m}(E/K)$ for the 
action of complex conjugation, where $\eps_c = \eps (-1)^{f_c} = -1$ ($f_c$ is odd by the third hypothesis and $\eps = 1$ since $-\eps$ is the sign of the functional equation 
for $E_{/K}$ which is $-1$ by Gross-Zagier). On the other hand, the Heegner point $y_K = P_1$ lies  
in the $\eps_1$-eigenspace of complex conjugation (again, by~\cite[Prop.5.4(2)]{gross:kolyvagin}) where $\eps_1 = \eps (-1)^{f_1} = 1$. Since $E(K)$ has rank one, the group 
$E(K)^{-}$ is torsion and since $E(K)[p] = 0$, we obtain that $(E(K)/p^mE(K))^{-} = 0$. Therefore, 
$$
\Sel_{p^m}(E/K)^{-} \isom \Sha(E/K)^{-}[p^m], 
$$
which implies $\kappa'_{c, m} \ne 0$. 
\end{proof}

\subsection{The example $E = \textrm{\bf 53A1}$.}
The Weierstrass equation for the curve $E=${\bf \textrm{53A1}} is    
{\small 
$
y^2 = x^3 + 405 x + 16038  
$
}
and $E$ has rank one over $\Q$. The Fourier coefficient 
$a_5(f) \equiv 5 + 1 \equiv 0 \mod 3$, so 
$\ell = 5$ is a Kolyvagin prime for $E$, the discriminant $D = 43$ and the prime $p = 3$. 
Kolyvagin's construction exhibits a class $\kappa_{5, 1} \in \H^1(K, E[3])$. We will 
prove the following proposition:

\begin{proposition}\label{prop:53a1}
The cohomology class $\kappa_{5, 1} \in \H^1(K, E[3])$ lies in the Selmer group $\Sel_3(E/K)$ and its image $\kappa'_{5, 1}$ in 
the Shafarevich-Tate group $\Sha(E/K)$ is a nonzero $3$-torsion element.  
\end{proposition}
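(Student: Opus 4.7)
The strategy is a direct application of Proposition~\ref{prop:nontriv} with parameters $c = \ell = 5$ and $m = 1$. Since $a_5 \equiv 0 \pmod{3}$ and $\ell + 1 = 6 \equiv 0 \pmod{3}$, the prime $\ell = 5$ is a Kolyvagin prime for the triple $(E, D, p) = (\mathbf{53A1}, 43, 3)$, so the class $\kappa_{5,1}\in \H^1(K, E[3])$ is defined. It remains to verify the three hypotheses of that proposition.

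Hypothesis~\ref{hyp:parity} is immediate because $f_5 = 1$ is odd.

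For Hypothesis~\ref{hyp:nondiv}, the plan is to follow the division-polynomial strategy of the proof of Proposition~\ref{prop:389-verify}. First, transport the Heegner point $y_5 \in E(K[5])$ computed in Section~\ref{subsec:ptsexamples} to the short Weierstrass model $y^2 = x^3 + 405x + 16038$, form the derived Heegner point $P_5 = \sum_{i=1}^5 i\,\sigma^i(y_5)$ for a chosen generator $\sigma$ of $\Gal(K[5]/K[1])$, and obtain explicit coordinates for $P_5 \in E(K[5])$. Then form the polynomial $g(x) = \phi_3(x) - X(P_5)\,\psi_3(x)^2$ and verify by factoring it over $K[5]$ that it has no root there; this rules out any $Q \in E(K[5])$ with $3Q = P_5$, establishing $P_5 \notin 3\,E(K[5])$.

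Hypothesis~\ref{hyp:sel} is the most delicate part. By Kolyvagin's local analysis (see \cite[\S 4]{mccallum:kolyvagin} and \cite[\S\S 4--6]{gross:kolyvagin}), the restriction of $\kappa_{5,1}$ to $\H^1(K_v, E[3])$ lies in the local Selmer subgroup for every place $v$ not dividing $\ell = 5$. At the unique prime $\lambda$ of $K$ above $5$ (which is inert because $\ell$ is Kolyvagin), the finite/singular comparison dictates that $\kappa_{5,1}|_\lambda$ lies in the unramified (Selmer) subspace if and only if the reduction of $y_K = P_1$ modulo $\lambda$ lies in $3\cdot\tE(\F_{25})$. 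Since $\#\tE(\F_{25}) = (\ell+1)^2 - a_5^2$ is divisible by $9$ (as both $\ell+1$ and $a_5$ are divisible by $3$), this reduces to an explicit finite check: compute the reduction of $y_K$ modulo $\lambda$ from the known affine coordinates of $y_K$ and verify that the resulting point in $\tE(\F_{25})$ is a triple in that group.

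The principal obstacle is Hypothesis~\ref{hyp:nondiv}: forming the derived point $P_5$ on the Weierstrass model requires applying the Galois action to $y_5$ across the sextic ring class field $K[5]$ with enough precision to recover exact coordinates, and then the factorization of the degree-$9$ polynomial $g(x)$ over $K[5]$ must be carried out cleanly. The Selmer verification, though conceptually subtle, collapses to a finite computation in $\tE(\F_{25})$ once explicit coordinates for $y_K$ are in hand, and Hypothesis~\ref{hyp:parity} is automatic; so the entire proof hinges on the successful completion of the division-polynomial computation exactly as in Proposition~\ref{prop:389-verify}.
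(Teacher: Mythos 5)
Your proposal follows the same overall strategy as the paper: invoke Proposition~\ref{prop:nontriv} with $(c,m)=(5,1)$ and verify its three hypotheses, with the non-divisibility of $P_5$ checked by transporting $y_5$ to the short Weierstrass model, forming $P_5 = \sum_{i=1}^5 i\,\sigma^i(y_5)$, and verifying that $g(x)=\phi_3(x)-X(P_5)\psi_3(x)^2$ has no root in $K[5]$. That part matches the paper exactly (including the reduction, as in Proposition~\ref{prop:389-verify}, to a factorization of a degree-9 polynomial over $K[5]$). Parity is trivially satisfied, as you say.

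Where you go beyond the paper's written proof is in the Selmer hypothesis. The paper's proof text only establishes non-divisibility and then concludes by ``using Proposition~\ref{prop:nontriv}''; it does not explicitly check that $\kappa_{5,1}$ lands in $\Sel_3(E/K)$, whereas you spell out the verification. Your reduction is the right one: away from $\lambda\mid 5$ the local conditions hold by Kolyvagin's general analysis, and at $\lambda$ the singular component of $\mathrm{loc}_\lambda(\kappa_{5,1})$ is identified (via the finite--singular isomorphism) with the image of $\mathrm{loc}_\lambda(y_K)$ in $E(K_\lambda)/3E(K_\lambda)\cong \tE(\F_{25})/3\tE(\F_{25})$. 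So $\kappa_{5,1}\in\Sel_3(E/K)$ iff $y_K\bmod\lambda$ is a triple in $\tE(\F_{25})$, which is a finite check once $y_K$ is known (and is meaningful here precisely because $9\mid \#\tE(\F_{25}) = (\ell+1)^2 - a_\ell^2 = 36$). This is a genuine improvement in rigor over the proof as written, and it does not change the paper's approach so much as complete it; the paper presumably delegates this check to the accompanying data file. No gap in your argument, and it is slightly more careful than the paper's exposition.
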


\begin{remark} 
Since $E/K$ has analytic rank one, Kolyvagin's conjecture
is automatic (since $m_0 < \infty$ by Gross-Zagier's formula) and one 
knows (see~\cite[Thm.~5.8]{mccallum:kolyvagin})
that there exist Kolyvagin classes $\kappa'_{c,m}$ which generate 
$\Sha(E/K)[p^\infty]$.  Yet, this result is not explicit in the sense that 
one does not know any particular Kolyvagin class which is non-trivial. 
The above proposition exhibits an explicit non-zero cohomology class 
in the $p$-primary part of the Shafarevich-Tate group $\Sha(E/K)$.  
\end{remark}

\begin{proof}
Using the data computed in Section~\ref{subsec:ptsexamples} for this curve, we 
apply the Kolyvagin derivative to compute the point $P_5$. In order to do 
this, one needs a generator of the Galois group $\Gal(K[5] / K)$. 
Such a generator is determined by the image of $\alpha$, which will be 
another root of $f(x)$ in $K[5]$. We check that the automorphism $\sigma$ 
defined by 
{\small 
\begin{eqnarray*}
\alpha & \mapsto & \frac{1}{1601320}(47343 + 54795\sqrt{-43})\alpha^5 + 
\frac{1}{2401980}(- 614771 -936861\sqrt{-43})\alpha^4 + \\  
& + & \frac{1}{600495}(34507457 + 40541607\sqrt{-43} )\alpha^3 + 
    \frac{1}{4803960}(102487877  -767102463\sqrt{-43} )\alpha^2 + \\
& + &  \frac{1}{400330}(- 61171198 + 52833377\sqrt{-43})\alpha + 
\frac{1}{200165}(18971815  -7453713\sqrt{-43})
\end{eqnarray*}
}
is a generator (we found this automorphism by factoring
the defining polynomial of the number field over the number
field $K[5]$). Thus, we can compute $\ds P_5 = \sum_{i = 1}^5 i \sigma^i(y_5)$. 

Note that we are computing the point on the Weierstrass model of $E$ 
rather than on the original model. The cohomology class $\kappa_{5, 1}$ is represented 
by the cocycle 
{\small 
$$
\ds \sigma \mapsto -\frac{(\sigma -1)P_5}{3} + \sigma\frac{P_5}{3}-\frac{P_5}{3}
$$
}
which is trivial if and only if 
$P_5 \in 3 E(K[5])$. To show that $P_5 \notin 3E(K[5])$ we repeat 
the argument of Proposition~\ref{prop:389-verify} and verify (using 
any factorization algorithm for polynomials over number fields) that  
the polynomial $g(x) = \phi_3(x) - X(P_5)\psi_3(x)^2$ has no linear factors over 
$K[5]$ (here, $X(P_5)$ is the $x$-coordinate of $P_5$). This means that there is no 
point $Q = (x, y) \in E(K[5])$, such that $3Q = P_5$, i.e., $\kappa_{5, 1} \ne 0$. 
Finally, using Proposition~\ref{prop:nontriv} we conclude that the class 
$\kappa'_{5, 1} \in \Sha(E/K)[3]$ is non-trivial. 
\end{proof}	

\begin{remark}
For completeness, all the computational data is provided (with the appropriate explanations) in the file \verb,53A1.txt,. 
We verified the irreducibility of $g(x)$ using MAGMA and PARI/GP independently.
\end{remark}

\bibliography{biblio}

\section{Appendix - Upper bounds on the logarithmic heights of the Heegner points $y_c$}
We explain how to compute an upper bound on the logarithmic height $h(y_c)$. The method 
first relates the canonical height $\widehat{h}(y_c)$ to special values of the first 
derivatives of certain automorphic $L$-functions via Zhang's generalization of the Gross-Zagier formula. Then we either 
compute the special values up to arbitrary precision using a well-known 
algorithm (recently implemented by Dokchitser) or use effective asymptotic upper 
bounds (convexity bounds) on the special values and Cauchy's integral formula. 
Finally, using some known bounds on the difference between the canonical and the logarithmic heights, we obtain explicit upper bounds on the logarithmic height 
$h(y_c)$. We provide a summary of the asymptotic bounds in 
Section~\ref{subsec:asymptotic} and refer the reader to~\cite{jetchev:asymptotic} 
for complete details.     

\subsection{The automorphic $L$-functions $L(f, \chi, s)$ and $L(\pi, s)$}
\label{subsec:lfunc} 

Let $d_c = c^2 D$ and let $\ds f = \sum_{n \geq 1} a_n q^n$ be the new eigenform of 
level $N$ and weight two corresponding to $E$. Let $\chi : \Gal(K[c]/K) \ra \C^\times$ 
be a ring class character. 

\vspace{0.1in}

\noindent \textit{1. The theta series $\theta_\chi$.} Recall that ideal classes 
for $\Pic(\cO_c)$ correspond to primitive, reduced 
binary quadratic forms of discriminants $d_c$. To each ideal class $\cA$ we 
consider the corresponding binary quadratic form $Q_{\cA}$ and the theta series 
$\theta_{Q_{\cA}}$ associated to it via 
$$
\theta_{Q_{\cA}} = \sum_{M} e^{2\pi i z Q_{\cA}(M)}
$$
which is a modular form for $\Gamma_0(d_c)$ of weight one with character $\eps$ (the quadratic character of $K$)  
according to Weil's converse theorem (see~\cite{shimura:jac_factors} for 
details). This allows us to define a cusp form  
$$
\theta_\chi = \sum_{\cA \in \Pic(\cO_c)} \chi^{-1}(\cA) \theta_{Q_{\cA}} \in S_1(\Gamma_0(d_c), \eps). 
$$
Here, we view $\chi^{-1}$ as a character of $\Pic(\cO_c)$ via the 
isomorphism $\Pic(\cO_c) \isom \Gal(K[c]/K)$. Let $\theta_\chi = \sum_{m \geq 0} 
b_m q^m$ be the Fourier expansion. By $L(f, \chi, s)$ we will always mean the 
Rankin $L$-function\edit{Put a reference for Rankin $L$-functions!} $L(f \otimes \theta_\chi, s)$ (equivalently, the $L$-function 
associated to the automorphic representation $\pi = f \otimes \theta_\chi$ of 
$\GL_2$).    

\vspace{0.1in}

\noindent \textit{2. The functional equation of $L(f, \chi, s)$.}
We recall some basic facts about the Rankin $L$-series $L(f \otimes \theta_\chi, s)$ 
following~\cite[\S III]{gross:heegner_points}. Since $(N, D)=1$, the {\em conductor} of 
$L(f \otimes \theta_\chi, s)$ is 
$Q = N^2 d_c^2$. The Euler factor at infinity (the gamma factor) is 
$L_\infty(f \otimes \theta_{\chi}, s) = \Gamma_\C(s)^2$. If we set 
$$
\Lambda(f\otimes \theta_{\chi}, s) = Q^{s/2} L_\infty(f \otimes \theta_\chi, s) L(f\otimes \theta_\chi, s) 
$$ 
then the function $\Lambda$ has a holomorphic continuation to the entire complex plane 
and satisfies the functional equation 
$$
\Lambda(f\otimes \theta_\chi, s) = -\Lambda(f \otimes \theta_\chi, 2-s). 
$$
In particular, the order of vanishing of $L(f\otimes \theta_\chi, s)$ at $s = 1$ is non-negative and odd, i.e., $L(f \otimes \theta_\chi, 1) = 0$. 

\vspace{0.1in}

\noindent \textit{3. The shifted $L$-function $L(\pi, s)$.}
In order to center the critical line at $\ds \Re(s) = \frac{1}{2}$ instead of 
$\ds \Re(s) = 1$ (which is consistent with Langlands convention), we will be 
looking at the shifted automorphic $L$-function
$$
L(\pi, s) = L \left (f \otimes \theta_\chi, s+\frac{1}{2} \right )
$$ 
Moreover, $L(\pi, s)$ satisfies a functional equation relating the values at 
$s$ and $1-s$. Let 
$$
L(\pi, s) = \sum_{n \geq 1} \frac{\lambda_\pi(n)}{n^{s}} = 
\prod_p (1 - \alpha_{\pi, 1}(p)p^{-s})^{-1} \dots (1 - \alpha_{\pi, d}(p)p^{-s})^{-1}  
$$
be the Dirichlet series and the Euler product of $L(\pi, s)$ 
(which are absolutely convergent for $\Re(s) > 1$).

\subsection{Zhang's formula}
For a character $\chi$ of $\Gal(K[c]/K)$, let 
$$
e_\chi = \frac{1}{\# \Gal(K[c]/K)} \sum_{\sigma \in \Gal(K[c]/K)} \chi^{-1}(\sigma) \sigma \in \C[\Gal(K[c]/K)]
$$
be the associated eidempotent. The canonical height $\widehat{h}( e_\chi y_c )$ is related via the generalized Gross-Zagier formula 
of Zhang to a special value of the derivative of the $L$-function 
$L(f, \chi, s)$ at $s = 1$ (see~\cite[Thm.1.2.1]{zhang:gross-zagier}). More precisely, 

\begin{theorem}[Zhang]\label{thm:zhang}
If $(\,,)$ denotes the Petersson inner product on $S_2(\Gamma_0(N))$ then 
$$
L'(f, \chi, 1) = \frac{4}{\sqrt{D}} (f, f) \widehat{h} (e_\chi y_c ). 
$$
\end{theorem}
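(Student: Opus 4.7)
The plan is to follow the Rankin--Selberg strategy of Gross and Zagier, as extended by Zhang to ring class characters and non-trivial conductors. I would compare an analytic expansion of $L'(f,\chi,1)$, obtained by differentiating an integral representation, with a geometric expansion of $\widehat{h}(e_\chi y_c)$ coming from arithmetic intersection theory on a model of $X_0(N)$; the two sides should agree local factor by local factor, and the global identity would then drop out.

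First I would set up the Rankin--Selberg integral
\[
   I(s) \;=\; \bigl\langle\, f(z)\,\overline{\theta_\chi(z)}\,,\, E(z,s)\,\bigr\rangle_{\Gamma_0(M)},
\]
where $M$ is the lcm of the levels of $f$ and $\theta_\chi$ and $E(z,s)$ is a suitably chosen Eisenstein series on $\Gamma_0(M)$ with nebentypus matching $\eps$. A standard unfolding computation shows that $I(s)$ equals $L(f\otimes\theta_\chi, s)$ up to an explicit meromorphic factor involving $(f,f)$, gamma factors, and a power of $\sqrt{D}$. The key point is that $E(z,s)$ is built precisely so that the center of symmetry for the functional equation is a point where $E$ vanishes to first order, forcing $L(f,\chi,1)=0$ and allowing one to differentiate cleanly.

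Next I would expand the derivative $E'(z,s)\bigr|_{s=s_0}$ into its Fourier/spectral components. By Zhang's version of the Siegel--Weil philosophy, this derivative decomposes into a sum of archimedean and non-archimedean kernels that can be recognized as local components of a Green's current for a divisor supported on CM points of discriminant $d_c = c^2 D$. Pairing against $f\cdot\overline{\theta_\chi}$ and using the multiplicity-one decomposition $\theta_\chi = \sum_{\cA}\chi^{-1}(\cA)\theta_{Q_{\cA}}$, the integral collapses to a sum indexed by $\Pic(\cO_c)\times\Pic(\cO_c)$ of values of this Green's current at pairs of Heegner points. This is the analytic side of the height identity.

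On the geometric side I would compute $\widehat{h}(e_\chi y_c)$ via the N\'eron--Tate decomposition
\[
   \widehat{h}(e_\chi y_c) \;=\; \sum_{v}\; \widehat{h}_v(e_\chi y_c),
\]
with each local height expressed as an Arakelov intersection number on a regular model of $X_0(N)$ of the divisor $(e_\chi y_c) - (\text{cuspidal divisor})$ with its complex conjugate. At archimedean places the local pairing is given by the same Green's function that arose from $E'(z,s_0)$; at finite places one uses the Gross--Zhang formula for intersections of CM divisors, handling in particular the primes dividing $c$ where the Heegner point lies on a ring class field rather than the Hilbert class field. After the eigenspace projections are carried out using the orthogonality of $\chi$, both expansions are indexed by the same data.

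The final step is to match the local terms. The factor $4/\sqrt{D}$ and the inner product $(f,f)$ arise naturally: $\sqrt{D}$ from the volume of a fundamental domain for the CM lattice, $(f,f)$ from normalizing $f$ in the Rankin--Selberg integral, and the $4$ from the degree of the map $X_0(N)\to E$ combined with the $2$ in $[\cO_K:\Z[\sqrt{-D}]\,]$ and the self-duality in the height pairing. The main obstacle is the non-archimedean local computation at primes $\ell\mid c$, where the geometry of the supersingular reduction and the ramification of $K[c]/K[1]$ must be handled carefully; this is exactly the technical heart of Zhang's extension of Gross--Zagier beyond the Heegner hypothesis and conductor one, and its execution requires the full machinery of~\cite{zhang:gross-zagier}.
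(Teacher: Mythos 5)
The statement you were asked to prove is not proved in the paper at all: it is attributed to Zhang and cited as~\cite[Thm.1.2.1]{zhang:gross-zagier}, and the surrounding text ("The canonical height $\widehat{h}(e_\chi y_c)$ is related via the generalized Gross-Zagier formula of Zhang\dots") makes clear that the authors are invoking the result, not establishing it. So there is no "paper's own proof" to compare against.

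Judged on its own terms, what you have written is a roadmap of the Gross--Zagier/Zhang method rather than a proof. The ingredients you list — the Rankin--Selberg integral against an Eisenstein series, the forced vanishing at the center coming from the odd functional equation, the decomposition of $E'(z,s_0)$ into local kernels matched against local N\'eron--Tate/Arakelov heights of CM divisors, and the special difficulty at primes dividing $c$ — are the correct skeleton. But you do not actually carry out any of the local computations, and you say yourself that "its execution requires the full machinery of~\cite{zhang:gross-zagier}." In other words, the entire content of the theorem is delegated to that machinery; the sketch cannot stand as a proof. Two smaller cautions: your accounting of the constant $4/\sqrt{D}$ is not right — the degree of $X_0(N)\to E$ does not enter this formula in the way you suggest (the normalization that absorbs the modular degree is already built into the choice of $(f,f)$ and the model of $E$), and the index $[\cO_K:\Z[\sqrt{-D}]]$ does not contribute a clean factor of $2$ here. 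Also, invoking a "Siegel--Weil philosophy" for $E'(z,s)$ is closer to Kudla's program than to the route Zhang actually takes in the cited paper; Zhang's argument is a direct extension of Gross--Zagier's unfolding and explicit local computations, not a Siegel--Weil identity. If the goal was to reproduce the paper's treatment, the correct answer is simply to cite~\cite[Thm.1.2.1]{zhang:gross-zagier}, as the authors do.
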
 

\noindent Since $\langle e_{\chi'}y_c, e_{\chi''} y_c\rangle = 0$ whenever $\chi' \ne \chi''$ (here, $\langle \,,\rangle$ denotes the N\'eron-Tate 
height pairing for $E$) and since $\widehat{h}(x) = \langle x, x\rangle $ then 
\begin{equation}\label{eqn:height}
\widehat{h}(y_c) = \widehat{h} \left (\sum_{\chi} e_\chi y_c \right ) = \sum_{\chi} \widehat{h}(e_\chi y_c).  
\end{equation}
Thus, we will have an upper bound on the canonical height $\widehat{h}(y_c)$ if 
we have upper bounds on the special values $L'(f, \chi, 1)$ for every character 
$\chi$ of $\Gal(K[c]/K)$.   

\subsection{Computing special values of derivatives of automorphic $L$-functions}
For simplicity, let $\gamma(s) = L_\infty(f \otimes \theta_\chi, s+1/2)$ be the 
gamma factor of the $L$-function $L(\pi, s)$. This means that if 
$\lambda(\pi, s) = Q^{s/2} \gamma(s) L(\pi, s)$ then $\Lambda(\pi, s)$ satisfies the 
functional equation $\Lambda(\pi, s) = \Lambda(\pi, 1-s)$. We will describe a 
classical algorithm to compute the value of $L^{(k)}(\pi, s)$ at $s = s_0$ up to arbitrary precision. The algorithm and its implementation is discussed in a greater generality in~\cite{dokchitser}. The main idea is to express $\Lambda(\pi, s)$ as an infinite series 
with rapid convergence which is usually done in the following sequence of steps:  

\begin{enumerate}
\item Consider the inverse Mellin transform of the gamma factor $\gamma(s)$, i.e., 
the function $\phi(t)$ which satisfies 
$$
\gamma(s) = \int_0^\infty \phi(t) t^s \frac{dt}{t}. 
$$
One can show (see~\cite[\S 3]{dokchitser}) that $\phi(t)$ decays exponentially for 
large $t$. Hence, the sum 
$$
\Theta(t) = \sum_{n = 1}^\infty \lambda_\pi(n) \phi \left (\frac{nt}{\sqrt{Q}} \right ) 
$$
converges exponentially fast. The function $\phi(t)$ can be computed numerically as 
explained in~\cite[\S 3-5]{dokchitser}. 

\item The Mellin transform of $\Theta(t)$ is exactly the function $\Lambda(\pi, s)$. 
Indeed,  
\begin{eqnarray*}
\int_0^\infty \Theta(t) t^s \frac{dt}{t} &=& 
\int_0^\infty \sum_{n = 1}^\infty \lambda_{\pi}(n) \phi \left (\frac{nt}{\sqrt{Q}} \right ) 
t^s \frac{dt}{t} = \sum_{n=1}^\infty \lambda_{\pi}(n) \int_0^\infty 
\phi \left (\frac{nt}{\sqrt{Q}} \right ) t^s \frac{dt}{t} = \\  
& = & \sum_{n = 1}^\infty \lambda_{\pi}(n) \left ( \frac{\sqrt{Q}}{n} \right )^s \int_0^\infty \phi(t') t'^s \frac{dt'}{t'} = Q^{s/2} \gamma(s) L(\pi, s) = \Lambda(\pi, s).  
\end{eqnarray*}

\item Next, we obtain a functional equation for $\Theta(t)$ which relates 
$\Theta(t)$ to $\Theta(1/t)$. Indeed, since $\Lambda(\pi, s)$ is holomorphic, Mellin's 
inversion formula implies that  
$$
\Theta(t) = \int_{c - i \infty}^{c + i \infty} \Lambda(\pi, s) t^{-s} ds,\ \forall c.  
$$
Therefore, 
\begin{eqnarray*}
\Theta(1/t) &=& \int_{c - i\infty}^{c+i\infty} \Lambda(\pi, s) (1/t)^{-s} ds = - 
t\int_{c - i\infty}^{c+i\infty} \Lambda(\pi, 1-s) t^{-(1-s)} ds = \\
&=& -t \int_{c - i\infty}^{c+i\infty} \Lambda(\pi, s') t^{-s'}ds' = -t \Theta(t). 
\end{eqnarray*}
Thus, $\Theta(t)$ satisfies the functional equation $\Theta(1/t) = -t \Theta(t)$. 

\item Next, we consider the incomplete Mellin transform 
$$
G_s(t) = t^{-s} \int_t^\infty \phi(x) x^s \frac{dx}{x}, \ t > 0
$$ 
of $\phi(t)$. The function $G_s(t)$ satisfies $\ds \lim_{t \ra 0} t^s G_s(t) = \gamma(s)$ 
and it decays exponentially. Moreover, it can be computed numerically 
(see~\cite[\S 4-5]{dokchitser}).  

\item Finally, we use the functional equation for $\Theta(t)$ to obtain 
\begin{eqnarray*}
\Lambda(\pi, s) &=& \int_0^\infty \Theta(t) t^s \frac{dt}{t} = \int_0^1  
\Theta(t) t^s \frac{dt}{t} + \int_1^{\infty} \Theta(t) t^s \frac{dt}{t} = \\
&=& \int_1^\infty \Theta(1/t') t'^{-s} \frac{dt'}{t'} + 
\int_1^\infty \Theta(t) t^s \frac{dt}{t} = \\
&=& -\int_1^\infty \Theta(t') t'^{1-s} \frac{dt'}{t'} + \int_1^\infty \Theta(t)t^s 
\frac{dt}{t}.  
\end{eqnarray*}

\item Finally, we compute 
\begin{eqnarray*}
\int_1^\infty \Theta(t)t^{s} \frac{dt}{t} &=& 
\int_1^\infty \sum_{n = 1}^\infty \lambda_{\pi}(n) \phi \left ( \frac{nt}{\sqrt{Q}}\right )
t^s \frac{dt}{t} = 
\sum_{n=1}^\infty \lambda_{\pi}(n) \int_1^\infty \phi \left ( \frac{nt}{\sqrt{Q}}\right ) t^s 
\frac{dt}{t} = \\
&=& \sum_{n=1}^\infty \lambda_{\pi}(n) \int_{\frac{n}{\sqrt{Q}}}^\infty \phi \left ( t'\right ) \left ( \frac{\sqrt{Q}t'}{n} \right )^s =  
\sum_{n=1}^\infty \lambda_{\pi}(n) G_s \left (\frac{n}{\sqrt{Q}} \right). 
\end{eqnarray*}
Thus, 
$$
\Lambda(\pi, s) = \sum_{n=1}^\infty \lambda_{\pi}(n) G_s \left (\frac{n}{\sqrt{Q}} \right) 
- \sum_{n=1}^\infty \lambda_{\pi}(n) G_{1-s} \left (\frac{n}{\sqrt{Q}} \right)
$$
is the desired expansion. From here, we obtain a formula for the $k$-th derivative 
$$
\frac{\partial^k}{\partial s^k} \Lambda(\pi, s) = 
\sum_{n=1}^\infty \lambda_{\pi}(n) \frac{\partial^k}{\partial s^k}G_s \left (\frac{n}{\sqrt{Q}} \right) 
- \sum_{n=1}^\infty \lambda_{\pi}(n) \frac{\partial^k}{\partial s^k}G_{1-s} \left (\frac{n}{\sqrt{Q}} \right). 
$$
The computation of the derivatives of $G_s(x)$ is explained in~\cite[\S 3-5]{dokchitser}. 
\end{enumerate}

\subsection{Asymptotic estimates on the canonical heights $\widehat{h}(y_c)$}\label{subsec:asymptotic}

In this section we provide an asymptotic bound on the canonical height 
$\widehat{h}(y_c)$ by using convexity bounds on the special values of the 
automorphic $L$-functions $L(\pi, s)$ defined in Section~\ref{subsec:lfunc}. 
We only outline the basic techniques used to prove the asymptotic bounds and 
refer the reader to~\cite{jetchev:asymptotic} for the complete details.
Asymptotic bounds on heights of Heegner points are obtained 
in~\cite{ricotta-vidick}, but these bounds are of significantly different type than 
ours. In our case, we fix the elliptic curve $E$ and let the fundamental discriminant 
$D$ and the conductor $c$ of the ring class field both vary. The 
result that we obtain is the following 

\begin{proposition}
Fix the elliptic curve $E$ and let the fundamental discriminant $D$ and the conductor 
$c$ vary. For any $\eps > 0$ the following asymptotic bound holds 
$$
\widehat{h}(y_c) \ll_{\eps, f} h_D D^{\eps} c^{2+\eps},  
$$
where $h_D$ is the class number of the quadratic imaginary field $K = \Q(\sqrt{-D})$. 
Moreover, the implied constant depends only on $\eps$ and the cusp form $f$. 
\end{proposition}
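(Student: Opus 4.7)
The plan is to combine Zhang's generalisation of the Gross--Zagier formula (Theorem~\ref{thm:zhang}), convexity (Phragm\'en--Lindel\"of) bounds for the Rankin--Selberg $L$-functions $L(f,\chi,s)$, and the elementary class-number estimate for $h_c=\#\Pic(\cO_c)$. In outline: step~1 converts the height into a sum of central derivatives, step~2 bounds each derivative in terms of the conductor $Q=N^{2}c^{4}D^{2}$, and step~3 packages everything together.

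\emph{Step 1 (spectral decomposition).} The orthogonality of the idempotents $e_\chi$ under the N\'eron--Tate pairing, recorded just before \eqref{eqn:height}, combined with Theorem~\ref{thm:zhang} gives
$$
\widehat h(y_c)\;=\;\sum_{\chi}\widehat h(e_\chi y_c)\;=\;\frac{\sqrt D}{4\,(f,f)}\sum_{\chi}L'(f,\chi,1),
$$
where $\chi$ runs over characters of $\Gal(K[c]/K)$. All summands are non-negative, so it suffices to estimate each derivative and then multiply by the character count.

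\emph{Step 2 (convexity and class number).} The completed $L$-function $\Lambda(f,\chi,s)=Q^{s/2}\Gamma_{\C}(s)^{2}L(f,\chi,s)$ has conductor $Q=N^{2}d_c^{2}=N^{2}c^{4}D^{2}$ and self-dual functional equation $\Lambda(s)=-\Lambda(2-s)$. Rankin--Selberg absolute convergence forces $|L(f,\chi,s)|\ll 1$ on $\Re s=\tfrac{3}{2}+\eps$, and the functional equation then gives $|L(f,\chi,s)|\ll Q^{1/2+\eps}$ on $\Re s=\tfrac{1}{2}-\eps$. Phragm\'en--Lindel\"of interpolation across the strip at the centre $s=1$ yields the convexity bound $|L(f,\chi,1)|\ll_{\eps}Q^{1/4+\eps}$, and Cauchy's integral formula on a circle of radius $1/\log Q$ about $s=1$ (where the same convexity bound applies) upgrades this to $|L'(f,\chi,1)|\ll_{\eps,f}c^{1+\eps}D^{1/2+\eps}$, the logarithmic loss being absorbed into $\eps$. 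The analytic class-number formula $h_c=c\,h_D\prod_{p\mid c}\bigl(1-\kr{-D}{p}\,p^{-1}\bigr)/[\cO_K^{\times}:\cO_c^{\times}]$ is dominated by $2^{\omega(c)}\ll c^{\eps}$ in the Euler product and by $1$ in the unit index (since $D>4$), yielding $h_c\ll h_D\,c^{1+\eps}$. Summing termwise produces
$$
\widehat h(y_c)\;\ll_{\eps,f}\;\sqrt D\cdot h_c\cdot c^{1+\eps}D^{1/2+\eps}\;\ll_{\eps,f}\;h_D\,c^{2+\eps}\,D^{1+\eps}.
$$

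The main obstacle is the gap between this $D^{1+\eps}$ and the $D^{\eps}$ asserted in the proposition: a full power of $D$ must be saved beyond pure convexity. This saving is not accessible from pointwise estimates alone, and must come either from a subconvex bound for $L(f\otimes\theta_\chi,\tfrac{1}{2})$ in the discriminant aspect (of Burgess or Michel--Venkatesh type) or, more naturally in this family, from an averaged second-moment estimate $\sum_{\chi}|L'(f,\chi,1)|^{2}$ across the ring-class character family, combined with Cauchy--Schwarz to exploit positivity. Both routes rest on spectral input (Petersson/Kuznetsov trace formulas for the $\GL_{2}$ forms averaged over the character family), whose careful implementation in this precise setting is the substance of~\cite{jetchev:asymptotic}.
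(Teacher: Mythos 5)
Your analysis is sound and in fact reproduces the paper's own argument step for step: expand $\widehat{h}(y_c)=\sum_\chi\widehat{h}(e_\chi y_c)$ via the orthogonality of idempotents, invoke Zhang's formula to convert each term to $\frac{\sqrt D}{4(f,f)}L'(f,\chi,1)$, bound the central derivative by a Phragm\'en--Lindel\"of convexity argument using $Q\asymp N^2D^2c^4$, and multiply by the character count $[K[c]:K]\asymp h_D\,c^{1+\eps}$. This is exactly the route the paper sketches. The discrepancy you flag is real: each term is $\ll_{\eps,f}\sqrt D\cdot Q^{1/4+\eps}\ll D^{1+\eps}c^{1+\eps}$, so the sum is $\ll h_D\,D^{1+\eps}c^{2+\eps}$, a full power of $D$ above the $D^\eps$ in the proposition. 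The paper's own Remark confirms this reading independently: the quoted Michel subconvexity bound $|L'|\ll_f D^{1/2-1/1057}c^{1-2/1057}$ is the same $D^{1/2}$ bulk as convexity with only a tiny saving, and multiplying through by the $\sqrt D$ from Zhang and the $\asymp h_D c$ characters still lands at essentially $h_D D^{1}c^{2}$.

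Where I would push back on your closing paragraph is the nature of the gap. You speculate that a Burgess/Michel--Venkatesh subconvexity input or a second-moment estimate could rescue the claim. It cannot: to reach $D^\eps$ you would need the central derivatives to be $\ll D^{-1/2+\eps}c^{1+\eps}$ on average over $\chi$ (since they are non-negative), which is a half power \emph{below} the Lindel\"of prediction. No plausible analytic input closes a full power of $D$ here. The correct diagnosis is therefore not a missing analytic ingredient but a defect in the proposition's statement, or in the constant in Zhang's formula as transcribed — e.g.\ a factor of $[K[c]:K]$ absorbed into the idempotent $e_\chi$, or a $\sqrt{d_c}=c\sqrt D$ that got shortened to $\sqrt D$, would shift the exponents. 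As written, the argument (yours and the paper's) yields
$$
\widehat{h}(y_c)\ll_{\eps,f}h_D\,D^{1+\eps}c^{2+\eps},
$$
and the exponent of $D$ in the proposition should almost certainly be $1+\eps$ rather than $\eps$. You should state this explicitly as the conclusion rather than hedging toward a hypothetical subconvexity route.
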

  
One proves the proposition by combining the formula of Zhang with convexity 
bounds on special values of automorphic $L$-functions. 
The latter are conveniently expressed in terms of a quantity 
known as the {\em analytic conductor} associated to the automorphic representation 
$\pi$ (see~\cite[p.12]{michel:parkcity}). It is a function $Q_\pi(t)$ over the real 
line, which is defined as 
$$
Q_\pi(t) = Q \cdot \prod_{i = 1}^d (1 + |it - \mu_{\pi, i}|), \ \forall t\in \R,  
$$ 
where $\mu_{\pi, i}$ are obtained from the gamma factor 
$$
L_\infty(\pi, s) = \prod_{i = 1}^d \Gamma_\R(s - \mu_{\pi, i}), \ \Gamma_\R(s) = 
\pi^{-s/2}\Gamma(s/2). 
$$
In our situation, $d = 4$ and $\mu_{\pi, 1} = \mu_{\pi, 2} = 0$, 
$\mu_{\pi, 3} = \mu_{\pi, 4} = 1$ (see~\cite[\S 1.1.1]{michel:parkcity} 
and~\cite[\S 3]{serre:zeta} for discussions of local factors at archimedian 
places). Moreover, we let $Q_\pi = Q_\pi(0)$.

The main idea is to prove that for a fixed $f$, 
$|L'(\pi_{f \otimes \theta_\chi}, 1/2)| \ll_{\eps, f} Q_{\pi_{f \otimes \theta_\chi}}^{1/4 + \eps}$, where the implied constant only depends on $f$ and $\eps$ (and is independent of $\chi$ and the 
discriminant $D$). To establish the bound, we first prove an asymptotic bound for 
the $L$-function $L(\pi_{f\otimes \theta_\chi}, s)$ on 
the vertical line $\Re(s) = 1 + \eps$ by either using the Ramanujan-Petersson 
conjecture or a method of Iwaniec (see~\cite[p.26]{michel:parkcity}). This gives us the 
estimate   
$|L(\pi_{f \otimes \theta_\chi}, 1+\eps + it)| \ll_{\eps, f} Q_{\pi_{f \otimes \theta_\chi}}(t)^\eps$. Then, by the functional equation 
for $L(\pi_{f \otimes \theta_\chi}, s)$ and Stirling's approximation formula, we deduce an upper bound for the 
$L$-function on the vertical line $\Re(s) = -\eps$, i.e., 
$|L(\pi_{f \otimes \theta_\chi}, -\eps + t)| 
\ll_{\eps, f} Q_{\pi_{f \otimes \theta_\chi}}(t)^{1/2 + \eps}$. Next, we apply Phragmen-Lindel\"of's convexity principle 
(see~\cite[Thm.5.53]{iwaniec-kowalski}) to obtain the bound
$|L(\pi_{f \otimes \theta_\chi}, 1/2 + it)| \ll_{\eps, f} Q_\pi(t)^{1/4+\eps}$ (also known as {\em convexity bound}). Finally, by applying Cauchy's integral formula for a 
small circle centered at 
$s = 1/2$, we obtain the asymptotic estimate 
$|L'(\pi_{f \otimes \theta_\chi}, 1/2)| \ll_{\eps, f} Q_{\pi_{f \otimes \theta_\chi}}^{1/4+\eps}$.      
Since $Q= N^2 d_c^2 = N^2 D^2 c^4$ in our situation and since 
$[K[c]:K] = h_D \prod_{\ell \mid c} (\ell+1)$, Zhang's formula (Theorem~\ref{thm:zhang}) and equation (\ref{eqn:height}) imply that for any $\eps > 0$, 
$$
\widehat{h}(y_c) \ll_{\eps, f} h_D D^{\eps} c^{2+\eps}.  
$$

\begin{remark}
In the above situation (the Rankin-Selberg $L$-function of two cusp forms 
of levels $N$ and $d_c = c^2D$), one can even prove a subconvexity bound   
$|L'(\pi_{f \otimes \theta_\chi}, 1/2)| \ll_{f} D^{1/2 - 1/1057} c^{1 - 2/1057}$, where 
the implied constant depends only on $f$ and is independent of $\chi$ 
(see~\cite[Thm.2]{michel:subconvexity}). Yet, the proof relies on much more involved 
analytic number theory techniques than the convexity principle, so we do not discuss it here.       
\end{remark} 
 
\comment{
In this section we summarize recent asymptotic bounds on the special values 
$L'(\pi, 1/2)$ of the derivatives of the automorphic $L$-functions $L(\pi, s)$ for $\pi$ being the automorphic representation of $\GL_4$ defined in Section~\ref{subsec:lfunc}. 
We only outline the basic techniques used to prove the asymptotic bounds and 
refer the reader to~\cite{jetchev:asymptotic} for the complete details. 
Similar bounds are obtained in~\cite{ricotta-vidick}, but only in the case 
when $\chi$ is a character of the ideal class group of $K$. 
Most of the results that we prove are conveniently expressed in terms of a quantity 
known as the {\em analytic conductor} associated to $\pi$ 
(see~\cite[p.12]{michel:parkcity}). It is a function $Q_\pi(t)$ over the real line, which 
is defined as 
$$
Q_\pi(t) = Q \cdot \prod_{i = 1}^d (1 + |it - \mu_{\pi, i}|), \ \forall t\in \R,  
$$ 
where $\mu_{\pi, i}$ are obtained from the gamma factor 
$$
L_\infty(\pi, s) = \prod_{i = 1}^d \Gamma_\R(s - \mu_{\pi, i}), \ \Gamma_\R(s) = 
\pi^{-s/2}\Gamma(s/2). 
$$
In our situation, $d = 4$ and $\mu_{\pi, 1} = \mu_{\pi, 2} = 0$, 
$\mu_{\pi, 3} = \mu_{\pi, 4} = 1$ (see~\cite[\S 1.1.1]{michel:parkcity} 
and~\cite[\S 3]{serre:zeta} for discussions on local factors at archimedian 
places). Moreover, we let $Q_\pi = Q_\pi(0)$. The main result can be summarized 
in the following 

\begin{proposition}[convexity bound]
The special value of the derivative of the $L$-function $L(\pi, s)$ at $s = 1/2$ satisfies
$$
|L'(\pi, 1/2)| \ll_\eps Q_\pi^{1/4 + \eps}. 
$$
Moreover, the asymptotic bound is effective in the sense that one can explicitly compute 
the constant.  
\end{proposition}

To establish the bound, we first prove an asymptotic bound for the $L$-function on 
the vertical line $\Re(s) = 1 + \eps$ by either using the Ramanujan-Petersson 
conjecture or a method of Iwaniec (see~\cite[p.26]{michel:parkcity}). This gives us 
$|L(\pi, 1+\eps + it)| \ll_\eps Q_\pi(t)^\eps$. Then, using the functional equation 
for $L(\pi, s)$ and Stirling's approximation formula we deduce an upper bound for the 
$L$-function on the vertical line $\Re(s) = -\eps$, i.e., $|L(\pi, -\eps + t)| \ll_\eps 
Q_\pi(t)^{1/2 + \eps}$. We then apply Phragmen-Lindel\"of's principle 
(see~\cite[Thm.5.53]{iwaniec-kowalski}) to obtain the bound
$|L(\pi, 1/2 + it)| \ll_\eps Q_\pi(t)^{1/4+\eps}$ (also known as {\em convexity bound}). 
Finally, applying Cauchy's integral formula for a small circle centered at $s = 1/2$ we
obtain the asymptotic estimate $|L'(\pi, 1/2)| \ll_\eps Q_\pi^{1/4+\eps}$.      
} 
 
\comment{ 
\noindent \textit{1. Setup.} 
Let $\pi$ be $f\otimes \theta_\chi$ considered as an automorphic representation of 
$\GL_2 \times \GL_2$. For the rest of the section, we will work with the 
automorphic $L$-function 
$$
L(\pi, s) = L \left (f \otimes \theta_\chi, s+\frac{1}{2} \right )
$$ 
in such a way that the critical line becomes $\ds \Re(s) = \frac{1}{2}$ and $L(\pi, s)$ satisfies a functional equation relating the values at $s$ and $1-s$. Moreover, 
let 
$$
L(\pi, s) = \sum_{n \geq 1} \frac{\lambda_\pi(n)}{n^{s}} = 
\prod_p (1 - \alpha_{\pi, 1}(p)p^{-s})^{-1} \dots (1 - \alpha_{\pi, d}(p)p^{-s})^{-1}  
$$

be the Dirichlet series and the Euler product of $L(\pi, s)$ (which are absolutely 
convergent for $\Re(s) > 1$). Most of the results that we will prove are conveniently 
expressed in terms of a quantity known as the analytic conductor associated to $\pi$ 
(see~\cite[p.12]{michel:parkcity}). It is a function $Q_\pi(t)$ over the real line, which 
is defined as 
$$
Q_\pi(t) = Q \cdot \prod_{i = 1}^d (1 + |it - \mu_{\pi, i}|), \ \forall t\in \R. 
$$ 

\noindent \textit{2. Estimates of $L(\pi, s)$ on the vertical lines 
$\Re(s) = -\eps$ and $\Re(s) = 1+\eps$}

The function $L(\pi, s)$ is entire of order one (see~\cite[pp.79-80]{gelbart-shahidi}). 
The corresponding completed $L$-function for $L(\pi, s)$ is then 
$$
\Lambda(\pi, s) = \Lambda(f \otimes \theta_\chi, s + 1/2) = Q^{s/2 + 1/4} 
\Gamma_\C(s+1/2)^2 L(\pi, s), 
$$
and it satisfies the functional equation $L(\pi, s) = -L(\pi, 1-s)$. 

\begin{lemma}
For any $\eps > 0$ we have 
$$
\left | L(\pi, 1 + \eps + it) \right | \ll_{\eps} Q_\pi(t)^\eps, \ \  
$$ 
\end{lemma}

\begin{proof}
Ramanujan-Petersson conjecture or Michel's exposition 
(see~\cite[p.26]{michel:parkcity}) of the method of Iwaniec.  
\end{proof}

\begin{lemma}
For any $\eps > 0$, 
$$
\left | L(\pi, -\eps + it) \right | \ll_{\eps} Q_\pi(t)^{1/2 + \eps}
$$
\end{lemma}

\begin{proof}
Stirling's approximation formula (see Formula (5.112) of~\cite{iwaniec-kowalski}) reads 
as 
$$
\Gamma(s) = \left ( \frac{2 \pi}{s} \right )^{1/2}  \left ( \frac{s}{e}\right )^s
\left (1 + O \left ( \frac{1}{|s|}\right ) \right ) 
$$
for $\Re(s) > 1/2$ as $|s| \ra \infty$. A straight consequence which is quite convenient 
for studying the behavior of $L(\pi, s)$ on vertical lines is 
$$
|\Gamma(\sigma + i t) | = C_\sigma |t|^{\sigma - \frac{1}{2}} e^{-\frac{\pi}{2} |t|}
\left (1 + O\left (\frac{1}{|t|} \right ) \right )
$$
for fixed $\sigma$, where $C_\sigma$ is a non-zero constant.  
The functional equation for $\Lambda(\pi, s)$ implies that 
\comment{
\begin{eqnarray*}
|L(\pi, -\eps + it)| $=$ \frac{|\Lambda(-\eps + it)|}{Q^{-\eps/2}|\Gamma_\C(-\eps + it)|^2 } = 
\frac{|\Lambda(1+\eps - it)|}{Q^{-\eps/2}|\Gamma_\C(-\eps + it)|^2} = \\
&=& \frac{Q^{1/2+\eps/2}|\Gamma_\C(1+\eps + it)||L(\pi, 1+\eps+ it)|}{Q^{-\eps/2}|\Gamma_\C(-\eps + it)|^2} \ll Q^{1/2+2\eps}
\end{eqnarray*}
}
\begin{eqnarray*}
|L(\pi, -\eps + it)| = Q^{1/2+\eps} \frac{|\Gamma_\C(1+\eps - it)|^2}{|\Gamma_\C(-\eps + it)|^2} 
|L(\pi, 1 + \eps - it)| \ll_\eps Q_\pi(t)^{1/2+\eps}
\end{eqnarray*}
\end{proof}

\noindent \textit{3. An application of Phragmen-Lindel\"of principle}

\begin{theorem}[Phragmen-Lindel\"of principle]
Let $f$ be a holomorphic function on $a \leq \sigma \leq b$ for 
some real numbers $a < b$, such that $f$ has finite order $A$, i.e., 
$|f(s)| = O(\exp(|s|^A))$ for $a \leq \sigma \leq b$. Assume that $|f(s)| \leq M$ 
for all $s$ on the boundary of the strip $a \leq \Re(s) \leq b$, i.e., for $\Re(s) = a$ or 
$\Re(s) = b$. Then $|f(s)| \leq M$ for every $s$ in the strip.  
\end{theorem}

\noindent \textit{4. Application of Cauchy's integral formula}
Finally, we apply Cauchy's integral formula to get an estimate for $L'(\pi, 1/2)$. 

\begin{proposition}
We have 
$$
|L'(\pi, 1/2)| \ll_\eps Q_\pi^{1/4 + \eps}. 
$$
\end{proposition}

\begin{proof}
Fix $\eps > 0$. Choose a sufficiently small circle $C_\delta$ around $s = 1/2$, such 
that $|L(\pi, s)| \ll_\eps Q_\pi^{1/4+\eps}$ for all $s \in C_\delta$. Using Cauchy's integral formula we obtain 
\begin{eqnarray*}
|L'(\pi, 1/2)| = \left | \frac{1}{2\pi i}\oint_{C_\eps} \frac{L(\pi, s)}{s - 1/2}ds \right | 
\ll_\eps \frac{1}{2\pi} \frac{2\pi \delta Q_\pi^{1/4+\eps}}{\delta} = Q_\pi^{1/4+\eps}.    
\end{eqnarray*}
\end{proof}
}

\subsection{Height difference bounds and the main estimates}\label{subsec:heightbds}
To estimate $h(y_c)$ we need a bound 
on the difference between the canonical and the logarithmic heights. Such a bound 
has been established in~\cite{silverman:height} and~\cite{cremona-siksek} and is effective. 

Let $F$ be a number field. For any non-archimedian place $v$ of $K$, let $E^0(F_v)$ denote 
the points of $E(F_v)$ which specialize to the identity component of the N\'eron model of 
$E$ over the ring of integers $\cO_v$ of $F_v$. Moreover, let $n_v = [F_v : \Q_v]$ and let 
$M_F^\infty$ denote the set of all archimedian places of $F$. A slightly weakened 
(but easier to compute) bounds on the height difference are provided by the 
following result of~\cite[Thm.2]{cremona-siksek} 

\begin{theorem}[Cremona-Prickett-Siksek]
Let $P \in E(F)$ and suppose that $P \in E^0(F_v)$ for every non-archimedian place $v$ 
of $F$. Then 
$$
\frac{1}{3[F : \Q]} \sum_{v \in M_F^\infty} n_v \log \delta_v \leq h(P) - \widehat{h}(P) 
\leq \frac{1}{3[F : \Q]} \sum_{v \in M_F^\infty} n_v \log \eps_v, 
$$ 
where $\eps_v$ and $\delta_v$ are defined in~\cite[\S 2]{cremona-siksek}. 
\end{theorem}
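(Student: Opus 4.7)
The plan is to decompose the height difference into a sum of local contributions and then control each one separately. Concretely, for an elliptic curve $E/F$ in Weierstrass form one has a canonical local height decomposition
\[
\widehat{h}(P) = \frac{1}{[F:\Q]} \sum_{v \in M_F} n_v \widehat{\lambda}_v(P),
\]
where $\widehat{\lambda}_v$ is the N\'eron local height at $v$, while the naive global height $h(P)$ also breaks up as $\frac{1}{[F:\Q]} \sum_v n_v \lambda_v^{\rm naive}(P)$ for a local logarithmic contribution $\lambda_v^{\rm naive}$ coming from $\max(1,|x(P)|_v)$ (modified appropriately at the $1/3$ factor to account for the definition $h(P) = \tfrac{1}{3}h_x(P) + O(1)$ built into the statement). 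First I would fix once and for all these two decompositions so that $h(P) - \widehat{h}(P)$ becomes a single sum $\frac{1}{3[F:\Q]}\sum_v n_v \phi_v(P)$ of differences of local functions.

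Next I would analyze the non-archimedean places. The key fact is that if $P \in E^0(F_v)$ for a non-archimedean $v$, then reduction theory (Tate's algorithm and the explicit description of $E^0$) implies that the N\'eron local height at $v$ coincides, up to the standard normalization, with the naive local contribution built from $\max(1,|x(P)|_v)$. Hence $\phi_v(P)=0$ at every such place, and only the archimedean places contribute. This is where the hypothesis $P \in E^0(F_v)$ for every finite $v$ is used in an essential way: without it one would pick up explicit correction terms indexed by the component group, which is precisely the content of the more general bound in Silverman's earlier paper.

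At an archimedean place $v$, the local height difference $\phi_v$ is bounded above and below by explicit constants $\log \epsilon_v$ and $\log \delta_v$. The construction of these constants proceeds by writing the archimedean N\'eron height as a logarithm of the elliptic sigma function (or equivalently of $|\wp(z) - e_i|$ type quantities), comparing it with $\log\max(1,|x(P)|_v)$, and then bounding the resulting continuous function on the compact torus $E(\C)$. Here one follows the strategy of Cremona--Prickett--Siksek: the local difference is a continuous function on $E(F_v)$, periodic under the lattice, and so attains a max $\log\epsilon_v$ and a min $\log\delta_v$ which are computable by evaluating the relevant theta/sigma expansions. Summing these bounds over archimedean $v$ with the weights $n_v/(3[F:\Q])$ yields the displayed inequalities.

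The main obstacle, in my view, is not any of the formal manipulations but rather producing the archimedean constants $\epsilon_v$ and $\delta_v$ in a form both sharp enough to be useful and explicit enough to compute. The naive Silverman-style estimate gives valid $\epsilon_v,\delta_v$ immediately but with substantial overestimation; getting the sharper Cremona--Prickett--Siksek constants requires a careful analysis of the behavior of $\wp(z)$ and its derivatives near the real and imaginary axes of a fundamental domain for the period lattice, together with an efficient procedure for bounding a trigonometric-type function on a compact set. Once these archimedean bounds are in hand, assembling them with the vanishing at finite places gives the theorem directly.
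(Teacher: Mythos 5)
The paper does not actually prove this theorem: it is imported verbatim from Cremona--Prickett--Siksek and cited as \cite[Thm.2]{cremona-siksek}, with $\eps_v$ and $\delta_v$ left defined only by reference to that source. So there is no in-paper proof against which to measure your argument; the only question is whether your sketch correctly reconstructs the external one. At a high level it does: the standard route is exactly to split both the naive height $h$ and the canonical height $\widehat h$ into weighted sums of local contributions over $M_F$, then show that the hypothesis $P\in E^0(F_v)$ makes the non-archimedean local difference vanish, and finally bound the archimedean local difference above and below by two constants obtained by studying, on a compact fundamental domain for the period lattice, the continuous periodic function comparing the N\'eron local height to the naive local contribution. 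Your remark that dropping the $E^0$-hypothesis would reintroduce component-group correction terms, and that this is where the more general Silverman-type bound enters, is also correct and matches the paper's parallel citation of \cite{silverman:height}.

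The place where the sketch is only a gesture is the archimedean step, and you correctly flag this yourself. The actual Cremona--Prickett--Siksek construction of $\eps_v$ and $\delta_v$ is not merely ``bound a continuous function on a compact set'': it uses a $q$-series/theta-series representation of the relevant functions on an annulus, a subdivision of a fundamental region, and an effective rigorous-numerics argument to certify the bounds; this is substantial work and is precisely what makes the bounds computable rather than merely existent. One must also be careful about normalization conventions --- whether the local N\'eron height includes the $\tfrac{1}{12}\log|\Delta|_v$ term, whether one compares against $\log\max(1,|x(P)|_v)$ or $\tfrac{1}{2}\log\max(1,|x(P)|_v)$, and how the overall factor $\tfrac{1}{3}$ enters --- since a mismatch here will not produce a cancellation at the finite places but a residual term. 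None of this invalidates your outline; it just means the outline is correct in shape but not a proof, which is appropriate here given the paper itself treats the result as a black box.
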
  

\begin{remark}
All of the points $y_c$ in our particular examples satisfies the condition 
$y_c \in E^0(K[c]_v)$ for all non-archimedian places $v$ of $K[c]$. Indeed, according 
to~\cite[\S III.3]{gross-zagier} (see also~\cite[Cor.3.2]{jetchev:tamagawa}) the 
point $y_c$ lies in $E^0(K[c]_v)$ up to a rational torsion point\footnote{See also~\cite{jetchev:tamagawa} for 
another application of this local property of the points $y_c$.}. Since 
$E(\Q)_{\tor}$ is trivial for all the curves that we are considering, the above 
proposition is applicable. In general, one does not need this assumption in order 
to compute height bounds (see~\cite[Thm.1]{cremona-siksek} for the general case).    
\end{remark}

\begin{remark}
A method for computing $\eps_v$ and $\delta_v$ up to arbitrary precision for real 
and complex archimedian places is provided in~\cite[\S 7-9]{cremona-siksek}.  
\end{remark}

\end{document}